\newtheorem{theorem}{Theorem}[section]
\newtheorem{lemma}{Lemma}[section]
\newtheorem{proposition}{Proposition}[section]
{\theorembodyfont{\rmfamily}
\newtheorem{definition}{Definition}[section]
\newtheorem{remark}{Remark}[section]

}
\newenvironment{proof}[1][Proof]{\noindent\textbf{#1.} }{\newline
\hspace*{\textwidth}\hspace*{-0,4cm} \rule{0.5em}{0.5em} \vspace{0,2cm}}
\begin{document}

\title{\textbf{Blow-up collocation solutions of nonlinear homogeneous
    Volterra integral equations}}

\author{R. Ben\'{\i}tez$^{1,*}$, V. J. Bol\'os$^{2,*}$ \\
  \\
  {\small $^1$ Dpto. Matem\'aticas, Centro Universitario de Plasencia, Universidad de Extremadura.}\\
  {\small Avda. Virgen del Puerto 2, 10600 Plasencia, Spain.}\\
  {\small e-mail\textup{: \texttt{rbenitez@unex.es}}} \\
  \\
  {\small $^2$ Dpto. Matem\'aticas para la Econom\'{\i}a y la Empresa, Facultad de Econom\'{\i}a,}\\
  {\small Universidad de Valencia. Avda. Tarongers s/n, 46022 Valencia, Spain.}\\
  {\small e-mail\textup{: \texttt{vicente.bolos@uv.es}}} \\
} \footnotetext{$^*$Work supported by project MTM2008-05460, Spain.}
\date{October 2014}
\maketitle

\begin{abstract}
  In this paper, collocation methods are used for detecting blow-up
  solutions of nonlinear homogeneous Volterra-Hammerstein integral
  equations. To do this, we introduce the concept of ``blow-up
  collocation solution'' and analyze numerically some blow-up time
  estimates using collocation methods in particular examples where
  previous results about existence and uniqueness can be
  applied. Finally, we discuss the relationships between necessary
  conditions for blow-up of collocation solutions and exact solutions.
\end{abstract}

\section{Introduction}

Some engineering and industrial problems are described by explosive
phenomena which are modeled by nonlinear integral equations whose
solutions exhibit blow-up at finite time (see \cite{Kirk09,CalCapo09}
and references therein).  Many authors have studied necessary and
sufficient conditions for the existence of such blow-up
time. Particularly, in \cite{Myd94,Myd99,Okra07,Okra08,Okra10},
equation
\begin{equation}
\label{eq0}
y\left( t\right) =\int _0 ^t K\left( t,s\right) G\left( y\left( s\right) \right) \,
\textrm{d}s , \qquad t\in I:=\left[ 0,T\right]
\end{equation}
was considered. In these works, conditions for the existence of a
finite blow-up time, as well as upper and lower estimates of it, were
given, although they were not very accurate in some cases.

A way for improving these estimations is to study numerical
approximations of the solution; in this aspect, collocation methods
have proven to be a very suitable technique for approximating
nonlinear integral equations, because of its stability and accuracy
(see \cite{Brun04}). Hence, the aim of this paper is to test the
usefulness of collocation methods for detecting blow-up solutions of
the nonlinear homogeneous Volterra-Hammerstein integral
equation (HVHIE) given by equation (\ref{eq0}).

Very recently, Yang and Brunner \cite{YB13} analysed the blow-up
behavior of collocation solutions for a similar Hammerstein-Volterra
integral equation with a convolution kernel
\begin{equation}
\label{eq-1}
y(t) = \phi(t)+\int_0^tK(t-s)G(s,y(s))\textrm{d}s,\qquad t\in I.
\end{equation}

Equation \eqref{eq0} should not be regarded as a particular case of
\eqref{eq-1} (for a convolution kernel), because in \cite{YB13} a
positive non-homogeneity $\phi(t)$ and a Lipschitz-continuous
nonlinearity $G$ were considered. Indeed, under the hypotheses on the
kernel and the nonlinearity considered in \cite{YB13}, it is
well-known that equation \eqref{eq0} has only the trivial solution
($y\equiv 0$) and moreover, under the conditions we shall state in
Section \ref{sec3}, the collocation equations given in \cite{YB13}
lead to the trivial sequence ($y_n =0$ for all
$n\in\mathbb{N}$). Thus, equation \eqref{eq0} considered here is
beyond the scope of the standard techniques, which usually impose
conditions that guarantee the uniqueness of the solutions.

An approach for solving this problem, which we shall follow in this
paper, is writing equation \eqref{eq0} as an implicitly linear
homogeneous Volterra integral equation (HVIE), i.e. setting $z = G\circ y$
and plugging it into the nonlinear integral equation \eqref{eq0} to obtain

\begin{equation}
  \label{eq2}
  z(t) =G\left( \left( \mathcal{V}z\right)
    \left( t\right) \right) =G\left( \int _0 ^t K\left( t,s\right)
    z(s) \, \textrm{d}s \right) , \qquad t\in I,
\end{equation}
being $\mathcal{V}$ the linear \textit{Volterra operator}.  There is a
one-to-one correspondence between solutions of (\ref{eq0}) and
(\ref{eq2}) (see \cite{Brun04,Kras84}), in particular, if $z$ is a
solution of (\ref{eq2}), then $y:=\mathcal{V}z$ is a solution of
(\ref{eq0}).

This paper is structured as follows: in Section \ref{sec2} we introduce
briefly the basic definitions and state the notation. Next, we devote
Section \ref{sec3} to generalize the results about existence and
uniqueness of nontrivial collocation solutions obtained in \cite{BB11}
in order to apply the results to blow-up problems. In Section
\ref{sec4}, we introduce the concept of ``blow-up collocation
solution'' and analyse numerically some blow-up time estimates using
collocation methods in particular examples where the previous results
about existence and uniqueness can be applied. Finally, in Section
\ref{sec5}, we discuss the relationships between necessary conditions
for blow-up of collocation solutions and exact solutions.

\section{Collocation problems for implicitly linear HVIEs}
\label{sec2}

Following the notation of \cite{Brun04}, a \textit{collocation
  problem} for equation (\ref{eq2}) is given by a mesh $I_h:=\left\{
  t_n\, :\, \, 0=t_0<t_1<\ldots <t_N=T\right\} $ and a set of $m$
\textit{collocation parameters} $0\leq c_1<\ldots <c_m\leq 1$.  We
denote $h_n:=t_{n+1}-t_n$ $(n=0,\ldots ,N-1)$ and the quantity
$h:=\textrm{max}\left\{ h_n\, :\, \, 0\leq n\leq N-1\right\} $ is the
\textit{diameter}\footnote{In \cite{Brun04,BB11}, the diameter $h$ is
  also called \textit{stepsize}. Here, in order to emphasize the
  variability of the stepsizes, we shall not follow this notation for
  preventing misunderstandings. Moreover, in the algorithms presented
  in Section \ref{sec:algoritmos}, the initial stepsize $h_0$ is set
  equal to $h$, but in succesive iterations the stepsizes may
  decrease.} of $I_h$. Moreover, the \textit{collocation points} are
given by $t_{n,i} :=t_n+c_ih_n$ $(i=1,\ldots ,m)$, and the set of
collocation points is denoted by $X_h$ (see \cite{Brun04,Brun92}). A
\textit{collocation solution} $z_h$ is then given by the
\textit{collocation equation}
\[
z_h\left( t\right) = G\left( \int _0 ^t K\left( t,s\right) z_h\left( s\right)
  \, \textrm{d}s \right) , \qquad t\in X_h\, ,
\]
where $z_h$ is in the space of piecewise polynomials of degree less
than $m$.

From now on, a ``collocation problem'' or a ``collocation solution''
will be always referred to the implicitly linear HVIE (\ref{eq2}). So,
if we want to obtain an estimation of a solution of the nonlinear
HVHIE (\ref{eq0}), then we have to consider $y_h:=\mathcal{V}z_h$.

At the beginning of Section \ref{sec4}, we define the concepts of
``blow-up collocation problems'' and ``blow-up collocation
solutions''.  To do this, we extend the definition of ``collocation
solution'' to meshes $I_h$ with infinite points. Taking this into
account, it is noteworthy that if $z_h$ is a blow-up collocation
solution, then $y_h$ also blows up at the same blow-up time.

As it is stated in \cite{Brun04}, a collocation solution $z_h$ is
completely determined by the coefficients $Z_{n,i}:=z_h\left(
  t_{n,i}\right) $ $(n=0,\ldots ,N-1)$ $(i=1,\ldots ,m)$, since
$z_h\left( t_n+vh_n\right) =\sum _{j=1}^m L_j\left( v\right) Z_{n,j}$
for all $v\in \left] 0,1\right] $, where $L_1\left( v\right) :=1$,
$L_j\left( v\right) :=\prod _{k\neq j}^m \frac{v-c_k}{c_j-c_k}$
$(j=2,\ldots ,m)$ are the Lagrange fundamental polynomials with
respect to the collocation parameters.  The values of $Z_{n,i}$ are
given by the system
\begin{equation}
\label{eqznigeneral}
Z_{n,i}=G\left( F_n\left( t_{n,i}\right) +h_n\sum _{j=1}^m
B_n\left( i,j\right) Z_{n,j}\right) ,
\end{equation}
where
\[
B_n\left( i,j\right) := \int _0^{c_i}K\left( t_{n,i},t_n+sh_n\right)
L_j\left( s\right) \, \textrm{d}s
\]
and
\begin{equation}
\label{eqlag0}
F_n\left( t\right) :=\int _0^{t_n} K\left( t,s\right)
z_h\left( s\right) \, \textrm{d}s.
\end{equation}
The term $F_n\left( t_{n,i}\right) $ is called the \textit{lag term}.

\section{Existence and uniqueness of nontrivial collocation solutions}
\label{sec3}

In \cite{BB11}, we used collocation methods for approximating the
nontrivial solutions of (\ref{eq0}). In particular, we studied the
collocation solutions of the implicitly linear HVIE \eqref{eq2}, and
assuming that some \textit{general conditions} were
held. Specifically, it was required that the kernel $K$ was a locally
bounded function; nevertheless, non locally bounded kernels,
e.g. weakly singular kernels, appear in many cases of blow-up
solutions (see \cite{Okra10}) and so, in the present work, we shall
first replace this condition with another not excluding this kind of
kernels. Hence, the \textit{general conditions} that we are going to
impose (even if they are not explicitly mentioned) are:
\begin{itemize}
\item\textbf{Over $K$.} The \textit{kernel} $K:\mathbb{R}^2\to \left[
    0,+\infty \right[ $ has its support in $\left\{ \left( t,s\right)
    \in \mathbb{R}^2\, :\, 0\leq s\leq t\right\} $.

  For every $t>0$, the map $s\mapsto K\left( t,s\right) $ is locally
  integrable, and $\mathcal{K}(t):=\int _0 ^t K\left( t,s\right)
  \textrm{d}s$ is a strictly increasing function. Moreover, $\lim
  _{t\rightarrow a^+} \int _a^t K\left( t,s\right) \, \textrm{d}s=0$
  for all $a\geq 0$.  \footnote{This last condition is new with
    respect to those given in \cite{BB11}, and replaces the locally
    boundedness condition.} Note that for convolution kernels (i.e.
  $K(t,s)=k(t-s)$) this last condition is always held, because $k$ is
  locally integrable and then
  \[
  \lim _{t\rightarrow a^+}\int _a^t k(t-s)\, \textrm{d}s = \lim
  _{t\rightarrow a^+}\int _0^{t-a} k(s)\, \textrm{d}s = \lim
  _{t\rightarrow 0^+}\int _0^t k(s)\, \textrm{d}s=0.
  \]
\item \textbf{Over $G$.} The \textit{nonlinearity} $G:\left[ 0,+\infty
  \right[ \to \left[ 0,+\infty \right[ $ is a continuous, strictly
  increasing function, and $G(0)=0$.
\end{itemize}

Note that, since $G$ is injective, the solution $y$ of equation
\eqref{eq0} is also given by $y=G^{-1}\circ z$, where $z$ is a solution of
\eqref{eq2}. Also, since $G(0)=0$, the zero function is always a
solution of both equations \eqref{eq0} and \eqref{eq2}, called
\textit{trivial solution}. Moreover, for convolution kernels, given a
solution $z(t)$ of \eqref{eq2}, any horizontal translation of $z$,
$z_c(t)$ defined by
\[
z_c(t) =
\begin{cases}
  0,& \text{if } t<c\\
  z(t-c)& \text{if } t\geq c,\\
\end{cases}
\]
is also a solution of \eqref{eq2} (see
\cite{AB01,AB03,BB11}). Motivated by this fact we give the next
definitions, that are also valid for nonconvolution kernels:

\begin{definition}\label{def:nearzero}
  We say that a property $\mathcal{P}$ holds \textit{near zero} if
  there exists $\epsilon >0$ such that $\mathcal{P}$ holds on $\left]
    0,\delta \right[ $ for all $0<\delta <\epsilon $.  On the other
  hand, we say that $\mathcal{P}$ holds \textit{away from zero} if
  there exists $\tau >0$ such that $\mathcal{P}$ holds on $\left]
    t,+\infty \right[ $ for all $t>\tau $.

  The concept ``away from zero'' was originally defined in a more
  restrictive way in \cite{BB11}; nevertheless both definitions are
  appropriate for our purposes.
\end{definition}

\begin{definition}
\label{nontrivial}
We say that a solution of (\ref{eq2}) is \textit{nontrivial} if it is
not identically zero near zero.  Moreover, given a collocation
problem, we say that a collocation solution is \textit{nontrivial} if
it is not identically zero in $\left] t_0,t_1\right]$.
\end{definition}

Given a kernel $K$, a nonlinearity $G$ and some collocation parameters
$\left\{ c_1,\ldots ,c_m\right\} $, in \cite{BB11} there were defined
three kinds of existence of nontrivial collocation solutions (of the
corresponding collocation problem) in an interval $I=\left[ 0,T\right]
$ using a mesh $I_h$: \textit{existence near zero}, \textit{existence
  for fine meshes}, and \textit{unconditional existence}. For these
two last kinds of existence, it is ensured the existence of nontrivial
collocation solutions in any interval, and hence, there is no
blow-up. On the other hand, ``existence near zero''
  is equivalent to the existence of a nontrivial \textit{collocation solution
  with adaptive stepsize}, as it is defined in \cite{YB13}. In this
  case, collocation solutions can always be extended a little more,
  but it is not ensured the existence of nontrivial collocation
  solutions for arbitrarily large $T$ and, thus, it can blow up in
  finite time.

  As in \cite{BB11}, we shall restrict our analysis to two particular
  cases of collocation problems, namely:
\begin{itemize}
\item Case 1: $m=1$ with $c_1>0$.
\item Case 2: $m=2$ with $c_1=0$.
\end{itemize}
In these cases, system (\ref{eqznigeneral}) is reduced to a single
nonlinear equation, whose solution is given by the fixed points of
$G\left( \alpha +\beta y\right) $ for some $\alpha,
\beta$. Determining conditions for the existence of collocation
solutions in the general case is a problem of an overwhelming
difficulty. In fact, in \cite{BB11} examples of equations for which,
in cases 1 and 2, there existed nontrivial collocation solutions, yet
in other some other cases, for the same equation, there were no
nontrivial collocation solutions, were given.

The proofs of the results obtained in \cite{BB11} rely on the locally
boundedness of the kernel $K$. Therefore, we will revise such proofs,
imposing the new \textit{general conditions} given at the beginning
of this section, and avoiding other hypotheses that turn out to be
unnecessary. For example, we extend these results to decreasing
convolution kernels, among others.

As in the previous work \cite{BB11}, we will need a Lemma on
nonlinearities $G$ satisfying the \textit{general conditions}:

\begin{lemma}
  \label{lema1} The following statements are equivalent to the
  statement that $\frac{G\left( y\right) }{y}$ is unbounded (in
  $\left] 0,+\infty \right[ $):
\begin{itemize}
\item[(i)] There exists $\beta _0>0$ such that $G\left( \beta y\right)
  $ has nonzero fixed points for all $0<\beta \leq \beta _0$.
\item[(ii)] Given $A\geq 0$, there exists $\beta _A>0$ such that
  $G\left( \alpha +\beta y\right) $ has nonzero fixed points for all
  $0\leq \alpha \leq A$ and for all $0<\beta \leq \beta _A$.
\end{itemize}
\end{lemma}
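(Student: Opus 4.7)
The plan is to argue cyclically via $\text{(ii)}\Rightarrow\text{(i)}\Rightarrow(\text{unbounded})\Rightarrow\text{(ii)}$. The key reparametrization is $u=\beta y$ (respectively $u=\alpha+\beta y$): a positive fixed point $y$ of $G(\beta y)=y$ corresponds bijectively to a $u>0$ with $G(u)/u=1/\beta$, while a positive fixed point of $G(\alpha+\beta y)=y$ corresponds to a $u>\alpha$ satisfying $\beta G(u)+\alpha=u$. This converts the fixed-point questions into questions about the range of the map $u\mapsto G(u)/u$, supplemented by an elementary intermediate value argument in the $\alpha>0$ case.

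The implications $\text{(ii)}\Rightarrow\text{(i)}$ and $\text{(i)}\Rightarrow(\text{unbounded})$ are short. For the first, taking $A=0$ in (ii) forces $\alpha=0$, and (ii) literally becomes (i). For the second, as $\beta$ ranges over $\left]0,\beta_{0}\right]$ the value $1/\beta$ ranges over $\left[1/\beta_{0},+\infty\right[$; statement (i) says each such $1/\beta$ is attained by $G(u)/u$ on $\left]0,+\infty\right[$, so the range of $G(u)/u$ contains the unbounded set $\left[1/\beta_{0},+\infty\right[$.

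The main effort goes into $(\text{unbounded})\Rightarrow\text{(ii)}$. Fix $A\geq 0$ and pick $M>\max(A,0)$. For $\alpha\in\left]0,A\right]$ I would apply the intermediate value theorem to the continuous function $\Phi_{\alpha,\beta}(y):=G(\alpha+\beta y)-y$: one has $\Phi_{\alpha,\beta}(0)=G(\alpha)>0$, and at $y_{1}:=(M-\alpha)/\beta$ the function takes the value $G(M)-(M-\alpha)/\beta$, which is $\leq 0$ whenever $\beta\leq(M-A)/G(M)$, yielding a positive root uniformly in $\alpha\in\left]0,A\right]$ for all such $\beta$. The borderline case $\alpha=0$ cannot be closed this way, since $\Phi_{0,\beta}(0)=0$, and this is where the unboundedness hypothesis enters. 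Since $u\mapsto G(u)/u$ is continuous on the connected set $\left]0,+\infty\right[$ its range is an interval, and being unbounded above it must contain $\left]a,+\infty\right[$ for some $a\geq 0$; choosing any $\beta_{0}'>0$ with $1/\beta_{0}'>a$ guarantees that $1/\beta$ lies in this range for every $\beta\in\left]0,\beta_{0}'\right]$, so $G(\beta y)=y$ admits a positive solution. The constant $\beta_{A}:=\min\{\beta_{0}',(M-A)/G(M)\}$ (equal to $\beta_{0}'$ when $A=0$) then delivers (ii).

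The main obstacle, as indicated, is precisely the $\alpha=0$ endpoint of the last implication: the IVT argument for $\alpha>0$ does not use unboundedness at all, and one is forced to convert the qualitative statement $\sup G(u)/u=+\infty$ into the quantitative solvability of $G(u)/u=1/\beta$ for every sufficiently small $\beta$. This conversion rests on the connectedness of the range of $G(u)/u$, i.e.\ on the continuity of $G$ on $\left]0,+\infty\right[$, and is the only step where the hypothesis is essential.
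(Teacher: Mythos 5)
The paper does not actually prove this lemma --- it is imported verbatim from the earlier work \cite{BB11} ("As in the previous work \dots we will need a lemma\dots"), so there is no in-paper proof to compare against. Judged on its own, your argument is correct and complete. The cyclic scheme $(ii)\Rightarrow(i)\Rightarrow(\textrm{unbounded})\Rightarrow(ii)$ is valid; the substitution $u=\beta y$ correctly identifies nonzero fixed points of $G(\beta y)$ with solutions of $G(u)/u=1/\beta$; and the two halves of the hard implication are each sound: the IVT argument for $\alpha\in\left]0,A\right]$ (where $\Phi_{\alpha,\beta}(0)=G(\alpha)>0$ and $\Phi_{\alpha,\beta}\left((M-\alpha)/\beta\right)\leq 0$ once $\beta\leq (M-A)/G(M)$) uses only continuity and strict monotonicity of $G$, while the $\alpha=0$ endpoint is correctly handled by noting that the range of the continuous map $u\mapsto G(u)/u$ on the connected set $\left]0,+\infty\right[$ is an interval contained in $\left]0,+\infty\right[$ and unbounded above, hence contains $\left]a,+\infty\right[$ for some $a\geq 0$ and in particular every value $1/\beta$ with $\beta$ small. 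Your closing observation --- that unboundedness is needed only at $\alpha=0$, because $\Phi_{0,\beta}(0)=0$ kills the IVT argument there --- is exactly the right diagnosis of where the hypothesis enters.
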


\subsection{Case 1: $m=1$ with $c_1>0$}

First, we shall consider $m=1$ with $c_1>0$.  Equations
(\ref{eqznigeneral}) are then reduced to
\begin{equation}
\label{eqzn11}
Z_{n,1}=G\left( F_n\left( t_{n,1}\right) +h_n B_n Z_{n,1}\right)
\qquad (n=0,\ldots ,N-1),
\end{equation}
where
\begin{equation}
\label{eqbnc1}
B_n:=B_n\left( 1,1\right) = \int _0^{c_1}K\left( t_{n,1},t_n+sh_n\right) \, \textrm{d}s
\end{equation}
and the lag terms $F_n\left( t_{n,1}\right) $ are given by
(\ref{eqlag0}) with $i=1$. Note that $B_n>0$ because the integrand in
(\ref{eqbnc1}) is strictly positive almost everywhere in $\left]
  0,c_1\right[ $.

\begin{remark}
\label{notahnbn}
In \cite{BB11}, it is ensured that $\lim _{h_n \rightarrow
  0^+}h_nB_n=0$ taking into account (\ref{eqbnc1}) and the old
\textit{general conditions} imposed over $K$. Thus, we must ensure
that the limit also vanishes considering the new \textit{general
  conditions}.  In fact, with a change of variable, we can express
(\ref{eqbnc1}) as
\begin{equation}
\label{newbn}
B_n=\frac{1}{h_n}\int _{t_n}^{t_{n,1}}K\left( t_{n,1},s\right) \, \textrm{d}s.
\end{equation}
Hence, by (\ref{newbn}) and the new condition, we have
\[
\lim _{h_n \rightarrow 0^+}h_nB_n=\lim _{t\rightarrow t_n^+}\int
_{t_n}^t K\left( t,s\right) \, \textrm{d}s=0,
\]
since $t_{n,1}=t_n+c_1h_n$.
\end{remark}

Now, we are in position to give a characterization of the existence
near zero of nontrivial collocation solutions:
\begin{proposition}
\label{prop1}
There is existence near zero
if and only if $\frac{G\left( y\right) }{y}$ is unbounded.
\end{proposition}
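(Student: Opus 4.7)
The plan is to establish both directions by combining Lemma~\ref{lema1} with the control on $h_n B_n$ provided by Remark~\ref{notahnbn}.

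\emph{Sufficiency} $(\Leftarrow)$. Assume $G(y)/y$ is unbounded. I would proceed by induction on $n$: given the collocation solution $z_h$ already defined on $[0,t_n]$, I need to produce $h_n>0$ arbitrarily small and a value $Z_{n,1}>0$ satisfying (\ref{eqzn11}), that is, a nonzero fixed point of $y\mapsto G(\alpha+\beta y)$ with $\alpha:=F_n(t_{n,1})$ and $\beta:=h_n B_n$. Writing $M_n$ for the maximum of $z_h$ on $[0,t_n]$, the estimate $F_n(t_{n,1})\leq M_n\int_0^{t_n}K(t_{n,1},s)\,\mathrm{d}s\leq M_n\mathcal{K}(t_{n,1})$ together with monotonicity of $\mathcal{K}$ shows that $\alpha$ stays uniformly bounded by some $A\geq 0$ for all $h_n$ in a small interval $(0,h^\ast)$. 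Lemma~\ref{lema1}(ii) applied with this $A$ yields $\beta_A>0$ such that $G(\alpha+\beta y)$ admits a nonzero fixed point for every $0\leq\alpha\leq A$ and $0<\beta\leq\beta_A$; Remark~\ref{notahnbn} then lets me shrink $h_n$ further so that $h_n B_n\leq\beta_A$, producing the desired $Z_{n,1}>0$.

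\emph{Necessity} $(\Rightarrow)$. I would argue by contrapositive. Suppose $G(y)/y$ is bounded, say by $M>0$. At the first step $n=0$ the lag term vanishes, $F_0(t_{0,1})=\int_0^0 K(t_{0,1},s)z_h(s)\,\mathrm{d}s=0$, so (\ref{eqzn11}) reduces to $Z_{0,1}=G(h_0 B_0\,Z_{0,1})$; hence $Z_{0,1}$ must be a nonzero fixed point of $G(\beta y)$ with $\beta:=h_0 B_0$. Boundedness gives $G(\beta y)\leq M\beta y<y$ for every $y>0$ whenever $M\beta<1$ (a quantitative version of the failure of Lemma~\ref{lema1}(i)), so no nontrivial fixed point exists for $\beta<1/M$. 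By Remark~\ref{notahnbn} one has $h_0 B_0\to 0$ as $h_0\to 0^+$, so for every sufficiently small $h_0$ no nonzero $Z_{0,1}$ exists, contradicting the possibility of starting the collocation solution near $t=0$.

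\textbf{Main obstacle.} The delicate step is the uniform bound on the lag term $F_n(t_{n,1})$ as $h_n$ ranges over a small interval: since $K$ is only locally integrable in $s$ rather than locally bounded, one cannot appeal to an $L^\infty$ bound on $K$. The bound instead rests on the boundedness of the piecewise polynomial $z_h$ on $[0,t_n]$ combined with the monotonicity (and local finiteness) of $\mathcal{K}$, while the new condition $\lim_{t\to a^+}\int_a^t K(t,s)\,\mathrm{d}s=0$ is used only through Remark~\ref{notahnbn} to guarantee $h_n B_n\to 0$. Once the uniform bound is in hand, both implications reduce to a direct application of Lemma~\ref{lema1}.
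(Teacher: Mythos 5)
Your proposal is correct and follows essentially the same route as the paper's proof: induction on $n$, Lemma~\ref{lema1}\textit{(ii)} combined with Remark~\ref{notahnbn} for sufficiency, and Lemma~\ref{lema1}\textit{(i)} (in your quantitative, contrapositive form) for necessity. The only point of divergence is the uniform bound on the lag term, which the paper takes as $A=\max\{F_n(t_n+c_1\epsilon):0\leq\epsilon\leq 1\}$ while you bound it by $M_n\,\mathcal{K}(t_n+c_1h^{\ast})$ using the monotonicity and finiteness of $\mathcal{K}$ --- a valid (and arguably cleaner) alternative under the new general conditions.
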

\begin{proof}

  ($\Leftarrow $) Let us prove that if $\frac{G\left( y\right) }{y}$
  is unbounded, then there is existence near zero. So, we are going to
  prove by induction over $n$ that there exist $H_n>0$ $\left(
    n=0,\ldots ,N-1\right) $ such that if $0<h_n\leq H_n$ then there
  exist solutions of the system (\ref{eqzn11}) with $Z_{0,1}>0$:

\begin{itemize}

\item For $n=0$, taking into account Remark \ref{notahnbn} and Lemma
  \ref{lema1}-\textit{(i)} (see below), we choose a small enough
  $H_0>0$ such that $0<h_0B_0\leq \beta _0$ for all $0<h_0\leq
  H_0$. So, since the lag term is $0$, we can apply Lemma
  \ref{lema1}-\textit{(i)} to the equation (\ref{eqzn11}), concluding
  that there exist strictly positive solutions for $Z_{0,1}$.

\item Let us suppose that, choosing one of those $Z_{0,1}$ and given
  $n>0$, there exist $H_1,\ldots ,$ $H_{n-1}>0$ such that if
  $0<h_i\leq H_i$ ($i=1,\ldots ,n-1$) then there exist coefficients
  $Z_{1,1},\ldots ,Z_{n-1,1}$ fulfilling the equation
  (\ref{eqzn11}). Note that these coefficients are strictly positive,
  and hence, it is guaranteed that the corresponding collocation
  solution $z_h$ is (strictly) positive in $\left] 0,t_n\right] $.

\item Finally, we are going to prove that there exists $H_n>0$ such
  that if $0<h_n\leq H_n$ then there exists $Z_{n,1}>0$ fulfilling the
  equation (\ref{eqzn11}) with the previous coefficients
  $Z_{0,1},\ldots Z_{n-1,1}$:

  Let us define
  \[
  A:=\max \left\{ F_n\left( t_n+c_1\epsilon \right) \, :\, 0\leq
    \epsilon \leq 1\right\} .
  \]
  Note that $A$ exists because $s\mapsto K\left( t,s\right) $ is
  locally integrable for all $t>0$, and $z_h$ is bounded in $\left[
    0,t_n\right] $; hence, $F_n$ (see (\ref{eqlag0})) is bounded in
  $\left[ t_n,t_n+c_1\right] $.  Moreover, $A\geq 0$ because $K$ and
  $z_h$ are positive functions.

  So, applying Lemma \ref{lema1}-\textit{(ii)} (see below), there
  exists $\beta _A>0$ such that $G\left( \alpha +\beta y\right) $ has
  nonzero (strictly positive) fixed points for all $0\leq \alpha \leq
  A$ and for all $0<\beta \leq \beta _A$. On one hand, taking into
  account Remark \ref{notahnbn}, we choose a small enough $0<H_n\leq
  1$ such that $0<h_nB_n\leq \beta _A$ for all $0<h_n\leq H_n$; on the
  other hand, choosing one of those $h_n$, we have $0\leq F\left(
    t_{n,1}\right) \leq A$. Hence, we obtain the existence of
  $Z_{n,1}$ as the strictly positive fixed point of $G\left( F\left(
      t_{n,1}\right) +h_nB_ny\right) $.

\end{itemize}

($\Rightarrow $) For proving the other condition, we use Lemma
\ref{lema1}-\textit{(i)}, taking into account Remark \ref{notahnbn}.
\end{proof}

A similar result was proved in \cite{BB11} with the additional
hypothesis ``$K\left( t,s\right) \leq K\left( t',s\right) $ for all
$0\leq s\leq t<t'$'' (or ``$k$ is increasing'' for the particular case
of convolution kernels $K(t,s)=k(t-s)$). We have shown that this
hypothesis is not needed and the difference between the proofs is the
choice of $A$.

Taking into account Remark \ref{notahnbn}, we can adapt the results
given in \cite{BB11} about sufficient conditions on existence for fine
meshes and unconditional existence to our new  \textit{general
  conditions}. This will help us to identify collocation problems
without blow-up (see Proposition \ref{prop10} below).

In the following results recall the Definition \ref{def:nearzero} of
the concepts ``near zero'' and ``away from zero''.

\begin{proposition}
\label{prop10}
Let $\frac{G(y)}{y}$ be unbounded near zero.

\begin{itemize}
\item If $K$ is a convolution kernel and $\frac{G(y)}{y}$ is bounded
  away from zero, then there is existence for fine meshes.
\item If $\frac{y}{G(y)}$ is unbounded away from zero, then there is
  unconditional existence.
\end{itemize}

If, in addition, $\frac{G(y)}{y}$ is a strictly decreasing function,
then there is at most one nontrivial collocation solution.
\end{proposition}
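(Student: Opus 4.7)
The plan is to adapt the inductive argument from the proof of Proposition \ref{prop1} so that the per-step stepsize bound does not shrink as $n$ grows, allowing the mesh to cover the full interval $[0,T]$. At each step one still needs a strictly positive fixed point of
\[
y = G(\alpha + \beta y), \qquad \alpha := F_n(t_{n,1}) \geq 0, \quad \beta := h_n B_n > 0,
\]
and the two bullets provide two different uniform ways to produce it. In both cases the common device is the intermediate value theorem applied to $\phi(y) := G(\alpha + \beta y) - y$: near $y=0$ one has $\phi > 0$ because either $\alpha > 0$ (so $\phi(0)=G(\alpha)>0$) or $\alpha=0$ together with $G(y)/y$ unbounded near zero forces $G(\beta y)/y$ to diverge; the remaining task is to find some $y^* > 0$ with $\phi(y^*) < 0$, uniformly in the data.

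For the first bullet, I would exploit the convolution structure through Remark \ref{notahnbn}: the identity $h_n B_n = \int_0^{c_1 h_n} k(s)\,\mathrm{d}s$ shows $\beta$ depends only on $h_n$ and is non-decreasing in $h_n$. Picking the stepsize bound $h^*$ small enough that $\int_0^{c_1 h^*} k(s)\,\mathrm{d}s < 1/M$, where $M$ is an upper bound of $G(y)/y$ on some $(\tau,+\infty)$, secures $\beta < 1/M$ at every step; then $G(\alpha + \beta y) \leq M(\alpha + \beta y) < y$ for $y$ sufficiently large (independently of $\alpha$), and IVT produces the required root. For the second bullet, no control on $\beta$ is needed: $y/G(y)$ unbounded away from zero makes $G(u)/u$ arbitrarily small at arbitrarily large $u$, and the target $\phi(y^*) < 0$, rewritten in the variable $u := \alpha + \beta y^*$, becomes $G(u)/u < (1 - \alpha/u)/\beta$, which is satisfied at some large $u$ regardless of $\alpha$ and $\beta$; IVT then works for every mesh.

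For uniqueness, if $G(y)/y$ is strictly decreasing and $0 < y_1 < y_2$ were both fixed points of $y = G(\alpha + \beta y)$, setting $u_i := \alpha + \beta y_i$ yields the identity $G(u_i)/u_i = y_i/(\alpha + \beta y_i)$; strict decrease on the left and non-decrease of $y \mapsto y/(\alpha + \beta y)$ on the right produce the contradictory inequalities $G(u_1)/u_1 > G(u_2)/u_2$ and $y_1/(\alpha + \beta y_1) \leq y_2/(\alpha + \beta y_2)$. Uniqueness at each step then propagates inductively from the (uniquely determined) initial value $Z_{0,1}$. The main obstacle I anticipate is the first bullet: the lag terms $F_n(t_{n,1})$ may grow without a priori bound as $n$ increases, and one must verify that the uniform ceiling $\beta < 1/M$, available only because of the convolution structure, really is enough to overcome an arbitrarily large $\alpha$. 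This is a geometric fact about the curve-versus-diagonal picture, but it must be spelled out carefully to rule out pathological growth of the iterates.
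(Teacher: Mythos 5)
Your proposal is correct and follows essentially the same route as the paper, which gives no independent proof of this proposition but defers to \cite{BB11}, and whose supporting machinery (the fixed-point analysis of $y\mapsto G(\alpha+\beta y)$, the identity $h_nB_n=\int_0^{c_1h_n}k(s)\,\mathrm{d}s$ from Remark \ref{notahnbn} giving a uniform bound on $\beta$ for convolution kernels, and the sequence reformulation of ``$\frac{y}{G(y)}$ unbounded away from zero'' noted in the remark following the proposition) is exactly what you deploy. The only detail worth spelling out is that $y=0$ always solves the step-$0$ equation $y=G(\beta y)$, so the uniqueness claim must be read, as in \cite{BB11}, as selecting the positive branch at $n=0$; from then on $\alpha=F_n(t_{n,1})>0$ rules out $y=0$, and your strict-monotonicity comparison of $G(u)/u$ against $y/(\alpha+\beta y)$ settles uniqueness at every subsequent step.
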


The proof is analogous to the one given in \cite{BB11}. Note that the
property ``$\frac{y}{G(y)}$ is unbounded away from zero'' appears
there as ``there exists a sequence $\left\{ y_n\right\}
_{n=1}^{+\infty }$ of positive real numbers and divergent to $+\infty
$ such that $\lim _{n\rightarrow +\infty} \frac{G\left( y_n\right)
}{y_n}=0$'', but both are equivalent.

\subsection{Case 2: $m=2$ with $c_1=0$}

Considering $m=2$ with $c_1=0$, we have to
solve the following equations:
\begin{eqnarray}
\label{eqzn12}
Z_{n,1}&=&G\left( F_n\left( t_{n,1}\right) \right) \\
\label{eqzn2}
Z_{n,2}&=&G\left( F_n\left( t_{n,2}\right) +h_nB_n\left( 2,1\right)
  Z_{n,1}+h_nB_n\left( 2,2\right) Z_{n,2}\right) ,
\end{eqnarray}
for $n=0,\ldots ,N-1$, where
\begin{equation}
\label{eqbn2j}
B_n\left( 2,j\right) = \int _0^{c_2}K\left( t_{n,2},t_n+sh_n\right)
L_j\left( s\right) \, \textrm{d}s,\qquad (j=1,2)
\end{equation}
and $F_n\left( t_{n,i}\right)$ $(i=1,2)$ are given by
(\ref{eqlag0}). Note that $B_n\left( 2,j\right) >0$ for $j=1,2$,
because the integrand in (\ref{eqbn2j}) is strictly positive almost
everywhere in $\left] 0,c_2\right[ $.

\begin{remark}
\label{notahnbn2}
As in the first case, we have to ensure
that $\lim _{h_n\rightarrow 0^+}h_nB_n\left( 2,j\right) =0$ for
$j=1,2$, taking into account the new \textit{general
  conditions} (see Remark \ref{notahnbn}). Actually, since $0<L_j\left( s\right)<1$ for $s\in
\left] 0,c_2\right[$, we have
\begin{equation}
\label{hnbn2j}
0 \leq h_n B_n\left( 2,j\right) \leq h_n\int _0^{c_2}K\left( t_{n,2},t_n+sh_n\right) \,
\textrm{d}s=\int _{t_n}^{t_{n,2}}K\left( t_{n,2},s \right) \, \textrm{d}s.
\end{equation}
Hence, by (\ref{hnbn2j}) and the new condition, the following inequalities are fulfilled:
\[
0\leq \lim _{h_n \rightarrow 0^+}h_nB_n\left( 2,j\right) \leq \lim
_{t\rightarrow t_n^+}\int _{t_n}^t K\left( t,s\right) \,
\textrm{d}s=0.
\]
\end{remark}

Analogously to the previous case, we present a characterization of the
existence near zero of nontrivial collocation solutions:

\begin{proposition}
\label{newprop4}
Let the map $t\mapsto K\left( t,s\right) $ be continuous in $\left]
  s,\epsilon \right[ $ for some $\epsilon >0$ and for all $0\leq
s<\epsilon $ (this hypothesis can be removed if $c_2=1$).  Then there
is existence near zero if and only if $\frac{G\left( y\right) }{y}$ is
unbounded.
\end{proposition}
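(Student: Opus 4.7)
The plan is to follow the same two-step induction scheme used in Proposition \ref{prop1}, combining Lemma \ref{lema1} with the limits recorded in Remark \ref{notahnbn2}.

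For the ($\Rightarrow$) direction I would examine the base step $n=0$, where both lag terms vanish. Equation (\ref{eqzn12}) forces $Z_{0,1}=G(0)=0$, and (\ref{eqzn2}) collapses to $Z_{0,2}=G(h_0 B_0(2,2)Z_{0,2})$. Existence near zero provides nontrivial fixed points of $G(\beta y)$ for arbitrarily small $\beta =h_0 B_0(2,2)$ (which tends to $0$ by Remark \ref{notahnbn2}), and Lemma \ref{lema1}-\textit{(i)} then forces $G(y)/y$ to be unbounded.

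For ($\Leftarrow$), assuming $G(y)/y$ is unbounded, I would prove by induction on $n$ the existence of $H_n>0$ such that for every $0<h_n\leq H_n$, equations (\ref{eqzn12})--(\ref{eqzn2}) admit solutions with $Z_{0,2}>0$, so that the corresponding $z_h$ is strictly positive on $\left]0,t_{n+1}\right]$. The case $n=0$ is handled exactly as above, choosing $H_0$ small enough (via Remark \ref{notahnbn2}) that $h_0 B_0(2,2)\leq \beta _0$ and applying Lemma \ref{lema1}-\textit{(i)}. For the inductive step, $Z_{n,1}=G(F_n(t_n))$ is already determined by the previously constructed collocation solution and does not depend on $h_n$. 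To solve for $Z_{n,2}$ I would set
\[
A:=\sup_{0<h\leq 1}\bigl[F_n(t_n+c_2 h)+h\,B_n(2,1)\, Z_{n,1}\bigr],
\]
so that $\alpha :=F_n(t_{n,2})+h_n B_n(2,1) Z_{n,1}\leq A$ for every $h_n\in \left(0,1\right]$, invoke Lemma \ref{lema1}-\textit{(ii)} to produce $\beta _A>0$, and finally shrink $H_n$ so that $h_n B_n(2,2)\leq \beta _A$ for $h_n\leq H_n$ (again by Remark \ref{notahnbn2}). This yields $Z_{n,2}$ as a strictly positive fixed point of $G(\alpha +\beta y)$ with $\beta :=h_n B_n(2,2)$.

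The main obstacle is guaranteeing $A<+\infty$. The summand $h\,B_n(2,1)Z_{n,1}$ stays controlled as $h\to 0^+$ by Remark \ref{notahnbn2}, so the real difficulty is showing that $F_n(t)$ remains bounded as $t$ decreases to $t_n$ from the right. This is exactly where the continuity hypothesis on $t\mapsto K(t,s)$ enters: combined with local integrability of $s\mapsto K(t,s)$ and the boundedness of $z_h$ on $[0,t_n]$, it yields by dominated convergence that $F_n$ extends continuously to $t_n^+$, hence is bounded on the compact interval $[t_n,t_n+c_2]$. In the case $c_2=1$ the collocation point $t_{n,2}$ coincides with the next mesh point $t_{n+1}$, so the system can be reorganised so that only evaluations at mesh points appear and no one-sided limit at $t_n$ is needed; this is what allows the continuity assumption to be dropped when $c_2=1$.
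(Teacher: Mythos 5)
Your overall two-step scheme (base case via Lemma \ref{lema1}-\textit{(i)} and Remark \ref{notahnbn2}, inductive step via Lemma \ref{lema1}-\textit{(ii)} with a uniform bound $A$ on the $\alpha$'s) matches the paper's, and your ($\Rightarrow$) direction is fine. But there is a genuine gap in the inductive step: you never establish that the collocation solution stays nonnegative on the new subinterval $\left[ t_n,t_{n+1}\right] $. In case 2 the restriction of $z_h$ to that subinterval is the linear function $z_h(t_n+vh_n)=(1-v/c_2)Z_{n,1}+(v/c_2)Z_{n,2}$, whose value at $v=1$ is negative unless $Z_{n,2}\geq (1-c_2)Z_{n,1}$. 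Nonnegativity of $z_h$ on $\left[ 0,t_{n+1}\right] $ is not a cosmetic clause of the induction hypothesis: it is what makes the next lag terms $F_{n+1}(t_{n+1,i})$ nonnegative, so that $Z_{n+1,1}=G\left( F_{n+1}(t_{n+1,1})\right) $ is even defined ($G$ lives on $\left[ 0,+\infty \right[ $) and so that Lemma \ref{lema1}-\textit{(ii)} applies with $0\leq \alpha \leq A$. The paper closes this loop by showing $Z_{n,2}\geq G\left( F_n(t_{n,2})\right) \geq (1-c_2)\,G\left( F_n(t_{n,1})\right) =(1-c_2)Z_{n,1}$ for small $h_n$, and \emph{this} is the only place the continuity hypothesis on $t\mapsto K(t,s)$ is used: it makes $F_n$, hence $G\circ F_n$, continuous at $t_{n,1}=t_n$, so $G\left( F_n(t_{n,2})\right) \to G\left( F_n(t_{n,1})\right) $ as $h_n\to 0^+$. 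It also gives the correct reason why the hypothesis can be dropped when $c_2=1$: then $(1-c_2)Z_{n,1}=0$ and positivity is automatic from $Z_{n,1},Z_{n,2}\geq 0$ (not because ``no one-sided limit at $t_n$ is needed'').

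Consequently, your attribution of the continuity hypothesis to the finiteness of $A$ is misplaced. Boundedness of $F_n$ on $\left[ t_n,t_n+c_2\right] $ already follows from the general conditions and the (inductively assumed) positivity and boundedness of $z_h$: $0\leq F_n(t)\leq \Vert z_h\Vert _{\infty ,[0,t_n]}\int_0^t K(t,s)\,\mathrm{d}s=\Vert z_h\Vert _{\infty ,[0,t_n]}\,\mathcal{K}(t)$ with $\mathcal{K}$ increasing and finite; no continuity in $t$ is needed there, and indeed the analogous $A$ in Proposition \ref{prop1} is obtained without any such hypothesis. To repair your argument you must add to the inductive step a verification of $Z_{n,2}\geq (1-c_2)Z_{n,1}$ along the lines above.
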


\begin{proof}

  ($\Leftarrow $) Let us prove that if $\frac{G\left( y\right) }{y}$
  is unbounded, then there is existence near zero. So, we are going to
  prove by induction over $n$ that there exist $H_n>0$ $\left(
    n=0,\ldots ,N-1\right) $ such that if $0<h_n\leq H_n$ then there
  exist solutions of the system (\ref{eqzn2}) with $Z_{0,2}>0$:

\begin{itemize}

\item For $n=0$, taking into account Remark \ref{notahnbn2} and Lemma
  \ref{lema1}-\textit{(i)}, we choose a small enough $H_0>0$ such that
  $0<h_0B_0\left( 2,2\right) \leq \beta _0$ for all $0<h_0\leq
  H_0$. So, since the lag terms are $0$ and $Z_{0,1}=G\left( 0\right)
  =0$, we can apply Lemma \ref{lema1}-\textit{(i)} to the equation
  (\ref{eqzn2}), concluding that there exist strictly positive
  solutions for $Z_{0,2}$.

\item Let us suppose that, choosing one of those $Z_{0,2}$ and given
  $n>0$, there exist constants $H_1,\ldots ,H_{n-1}>0$ such that if $0<h_i\leq
  H_i$ ($i=1,\ldots ,n-1$) then there exist coefficients
  $Z_{1,2},\ldots ,Z_{n-1,2}$ fulfilling the equation
  (\ref{eqzn2}). Moreover, let us suppose that $z_h$ is positive in
  $\left[ 0,t_n\right] $, i.e. these coefficients satisfy $Z_{l,2}\geq
  \left( 1-c_2\right) Z_{l,1}\geq 0$ for $l=1,\ldots ,n-1$, where
  $Z_{l,1}$ is given by (\ref{eqzn12}).

\item Finally, we are going to prove that there exists $H_n>0$ such
  that if $0<h_n\leq H_n$ then there exists $Z_{n,2}>0$ fulfilling the
  equation (\ref{eqzn2}) with the previous coefficients, and $z_h$ is
  positive in $\left[ 0,t_{n+1}\right] $, i.e.  $Z_{n,2}\geq \left(
    1-c_2\right) Z_{n,1}\geq 0$:

    Let us define
    \begin{eqnarray}
    \label{coefa2}
    A &:=& \max \left\{ F_n\left( t_n+c_2\epsilon \right) +\right. \\
    \nonumber
    & & \left. \epsilon  G\left( F_n\left( t_n+c_1\epsilon \right) \right) \int _0^{c_2}K\left( t_{n,2},t_n+sh_n\right)
      L_1\left( s\right) \, \textrm{d}s \,
      :\, 0\leq \epsilon \leq 1\right\} .
    \end{eqnarray}
    Note that $A$ exists and $A\geq 0$ because $s\mapsto K\left(
      t,s\right) $ is locally integrable for all $t>0$, $z_h$ is
    bounded and positive in $\left[ 0,t_n\right] $, the nonlinearity
    $G$ is bounded and positive, and the polynomial $L_1$ is bounded
    and positive in $\left[ 0,c_2\right] $.

    So, applying Lemma \ref{lema1}-\textit{(ii)}, there exists $\beta
    _A>0$ such that $G\left( \alpha +\beta y\right) $ has nonzero
    (strictly positive) fixed points for all $0\leq \alpha \leq A$ and
    for all $0<\beta \leq \beta _A$. On one hand, taking into account
    Remark \ref{notahnbn2}, we choose a small enough $0<H_n\leq 1$
    such that $0<h_nB_n(2,2)\leq \beta _A$ for all $0<h_n\leq H_n$; on
    the other hand, taking into account (\ref{eqlag0}), the lag terms
    $F_n\left( t_{n,i}\right) $ are positive for $i=1,2$, because $K$
    and $z_h$ are positive. Therefore, by (\ref{eqzn12}),
    $Z_{n,1}=G\left( F_n\left( t_{n,1}\right) \right) $ is positive,
    because $G$ is positive.  Moreover, $h_nB_n\left( 2,1\right) $ is
    positive. So,
    \[
    0\leq F_n\left( t_{n,2}\right) +h_nB_n\left( 2,1\right) Z_{n,1}\leq A.
    \]
    Hence, we obtain the existence of $Z_{n,2}$ as the strictly
    positive fixed point of the function $G\left( \alpha +\beta y\right) $ where
    $\alpha :=F_n\left( t_{n,2}\right) +h_nB_n\left( 2,1\right)
    Z_{n,1}$ and $\beta :=h_nB_n(2,2)$.

    Concluding, we have to check that $Z_{n,2}\geq \left( 1-c_2\right)
    Z_{n,1}$. On one hand,
    \[
    Z_{n,2}=G\left( F_n\left( t_{n,2}\right) +h_n\sum_{j=1}^2B_n\left(
        2,j\right) Z_{n,j}\right) \geq G\left( F_n\left(
        t_{n,2}\right) \right)
    \]
    because $B_n\left( 2,j\right) Z_{n,j}\geq 0$ for $j=1,2$.  On the
    other hand, since $t\mapsto K\left( t,s\right) $ is continuous in
    $\left] s,\epsilon \right[ $ for some $\epsilon >0$ and for all
    $0\leq s<\epsilon $, we can suppose that the stepsize is small
    enough for $t_{n,1}<\epsilon $, and then $F_n(t)$ is continuous in
    $t_{n,1}$ (see \ref{eqlag0}). Hence, since $G$ is continuous,
    $G\left( F_n(t)\right) $ is also continuous in $t_{n,1}$,
    i.e. $\lim _{h_n\rightarrow 0^+}G\left( F_n\left( t_{n,2}\right)
    \right) = G\left( F_n\left( t_{n,1}\right) \right) $. Therefore,
    choosing a small enough $h_n$, we have
    \[
    G\left( F_n\left( t_{n,2}\right) \right) \geq \left( 1-c_2\right)
    G\left( F_n\left( t_{n,1}\right) \right) =\left( 1-c_2\right)
    Z_{n,1}.
    \]

\end{itemize}

($\Rightarrow $) For proving the other condition, we use Lemma
\ref{lema1}-\textit{(i)}, taking into account Remark \ref{notahnbn2}.
\end{proof}

It is noteworthy that a similar result was proved in \cite{BB11}, but
the hypothesis on the kernel was ``$K\left( t,s\right) \leq K\left(
  t',s\right) $ for all $0\leq s\leq t<t'$'', and so, unlike
Proposition \ref{newprop4}, it could not be applied to decreasing
convolution kernels (for example). Again, the main difference between
both proofs lies in the choice of $A$.

Note that for convolution kernels $K(t,s)=k(t-s)$, the hypothesis on
$K$ is equivalent to say that $k$ is continuous near zero, which is a
fairly weak hypothesis. On the other hand, for general kernels, this
hypothesis is only needed to ensure that $t\mapsto K\left( t,s\right)
$ is continuous in $t_{n,1}$. Therefore, if it is not continuous, it
is important to choose $h_i$ such that the mapping $t\mapsto K\left(
  t,s\right)$ is continuous at the collocation points $t_{i,1}$
$(i=0,\ldots N-1)$.  More specifically, the hypothesis on $K$ is only
needed to check that $Z_{n,2}\geq \left( 1-c_2\right) Z_{n,1}$, and
hence, to ensure that $z_h$ is positive in $\left[ 0,t_n\right]
$. Nevertheless, this hypothesis is not needed to verify that
$Z_{0,2}>Z_{0,1}=0$, and so, it is always ensured the existence of
$Z_{1,2}$ (and obviously $Z_{1,1}$), even if the hypothesis does not
hold.  Hence, if we use another method to check that $Z_{1,2}\geq
\left( 1-c_2\right) Z_{1,1}$ (e.g., numerically), then it is ensured
the existence of $Z_{2,2}$. Repeating this reasoning we can guarantee
the existence of $Z_{n,2}$, removing the hypothesis on $K$.  Thus, we
can state a result analogous to Proposition \ref{newprop4} without
hypothesis on the kernel:

\begin{proposition}
\label{prop4b}
$\frac{G\left( y\right) }{y}$ is unbounded if and only if there exists
$H_0>0$ such that there are nontrivial collocation solutions in
$\left[ 0,t_1\right] $ for $0<h_0\leq H_0$.  In this case, there
always exists $H_1>0$ such that there are nontrivial collocation
solutions in $\left[ 0,t_2\right] $ for $0<h_1\leq H_1$.

Moreover, if $\frac{G\left( y\right) }{y}$ is unbounded and there is a
\textbf{positive} nontrivial collocation solution in $\left[
  0,t_n\right] $, then there exists $H_n>0$ such that there are
nontrivial collocation solutions in $\left[ 0,t_{n+1}\right] $ for
$0<h_n\leq H_n$.
\end{proposition}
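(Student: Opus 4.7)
The plan is to reuse, almost verbatim, the inductive step from the proof of Proposition \ref{newprop4}, after isolating exactly where its continuity hypothesis on $K$ was invoked. Rereading that proof, the hypothesis enters only at the very end, to establish $Z_{n,2} \geq (1 - c_2)\, Z_{n,1}$, which in turn propagates positivity of $z_h$ into the next subinterval. Since Proposition \ref{prop4b} asks only for \emph{existence} of the coefficients, and supplies positivity of $z_h$ as a hypothesis in its third clause, that requirement can simply be dropped and the rest of the earlier induction applies unchanged.

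For the equivalence, I would run the $(\Leftarrow)$ direction as the $n = 0$ step of Proposition \ref{newprop4}: using Remark \ref{notahnbn2} I pick $H_0 > 0$ with $h_0 B_0(2,2) \leq \beta_0$ for $0 < h_0 \leq H_0$, and then, since the lag terms vanish and $Z_{0,1} = G(0) = 0$, Lemma \ref{lema1}-(i) applied to $G(h_0 B_0(2,2)\, y)$ produces a strictly positive $Z_{0,2}$. The converse reverses this chain: nontrivial solvability for arbitrarily small $h_0$ yields nonzero fixed points of $G(\beta y)$ for all sufficiently small $\beta > 0$, whence Lemma \ref{lema1}-(i) forces $G(y)/y$ to be unbounded.

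For the second assertion, the one genuinely new observation is that $Z_{0,1} = 0$ together with $Z_{0,2} > 0$ already force $z_h$ to be strictly positive on $(0, t_1]$: on that interval $z_h$ is a polynomial of degree less than two which vanishes at $t_0$ and attains the positive value $Z_{0,2}$ at $t_0 + c_2 h_0$, hence coincides with $v \mapsto (v/c_2)\, Z_{0,2}$ in the local parameter. Once this is in hand, the $n = 1$ step of the induction from Proposition \ref{newprop4} runs with no continuity assumption on $K$: one builds $A$ from (\ref{coefa2}), invokes Lemma \ref{lema1}-(ii) to obtain $\beta_A$, and selects $H_1$ via Remark \ref{notahnbn2}. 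The third clause is then immediate: the assumed positivity of $z_h$ on $[0, t_n]$ plays exactly the role of the propagated positivity in the earlier proof, so the same construction delivers $H_n$. The main obstacle (such as it is) is conceptual rather than technical: one must spot that the continuity assumption in Proposition \ref{newprop4} served only to propagate positivity, and that automatic positivity on $(0, t_1]$ is already supplied by $Z_{0,1} = 0$; once those two observations are in place, the argument is essentially a bookkeeping decoupling of the ``existence'' and ``positivity propagation'' parts of the earlier induction.
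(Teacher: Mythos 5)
Your proposal is correct and follows essentially the same route as the paper: the authors do not write a separate proof of Proposition \ref{prop4b}, but justify it in the preceding paragraph by observing precisely what you observe, namely that the continuity hypothesis on $K$ in Proposition \ref{newprop4} is used only to verify $Z_{n,2}\geq (1-c_2)Z_{n,1}$ (i.e.\ to propagate positivity of $z_h$), that $Z_{0,1}=0<Z_{0,2}$ already gives positivity on $\left[0,t_1\right]$ for free (hence the existence of $H_1$), and that assuming positivity on $\left[0,t_n\right]$ lets the inductive step run unchanged. Your explicit identification of $z_h$ with $v\mapsto (v/c_2)Z_{0,2}$ on the first subinterval is a slightly more detailed justification of that first positivity claim than the paper gives, but the argument is the same.
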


Considering Remark \ref{notahnbn2} and taking the coefficient $A$
given in (\ref{coefa2}), we can adapt some results of \cite{BB11}
about sufficient conditions on existence for fine meshes and
unconditional existence to our new \textit{general conditions}. These
results are useful for identifying collocation problems without
blow-up.

\begin{proposition}
\label{prop11}
Let $\frac{G(y)}{y}$ be unbounded near zero, and let the map $t\mapsto
K\left( t,s\right) $ be continuous in $\left] s,\epsilon \right[ $ for
some $\epsilon >0$ and for all $0\leq s<\epsilon $ (this hypothesis
can be removed if $c_2=1$).

\begin{itemize}
\item If $K$ is a convolution kernel and $\frac{G(y)}{y}$ is bounded
  away from zero, then there is existence for fine meshes.
\item If $\frac{y}{G(y)}$ is unbounded away from zero, then there is
  unconditional existence.
\end{itemize}

If, in addition, $\frac{G(y)}{y}$ is a strictly decreasing function,
then there is at most one nontrivial collocation solution.
\end{proposition}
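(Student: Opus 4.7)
The plan is to iterate the one-step construction of Proposition \ref{newprop4} along the whole mesh and, under each hypothesis, to choose the admissible stepsizes $H_n$ either uniformly in $n$ (fine-mesh case) or without any constraint (unconditional case); uniqueness is then an independent pointwise consequence of strict monotonicity. The inductive step of Proposition \ref{newprop4} produces $H_n$ by applying Lemma \ref{lema1}-\textit{(ii)} to the affine argument of $G$ with upper bound $A$ from (\ref{coefa2}) and slope $\beta=h_n B_n(2,2)$; so the task reduces to controlling $A$ uniformly in $n$ (first bullet) and to removing the restriction on $\beta$ altogether (second bullet). The overall structure mirrors the proof of Proposition \ref{prop10} given in \cite{BB11}, the only new ingredient being the extra summand that the Case~2 bound (\ref{coefa2}) carries with respect to the Case~1 expression.

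For the first bullet, suppose $K(t,s)=k(t-s)$ and pick $M, y_0>0$ with $G(y)\leq M y$ for $y\geq y_0$; then $G(y)\leq M y + G(y_0)$ for every $y\geq 0$. Plugging this into (\ref{eqzn2}) and using the convolution form of $F_n$, one obtains a discrete Gronwall-type inequality for $\max_i Z_{n,i}$ with convolution kernel $k$ and data depending on $T$, $M$, $G(y_0)$ and $\int_0^T k$, whose output is independent of the mesh. This uniform bound on $z_h$ feeds back into (\ref{coefa2}) to give a uniform bound on $A$ and hence a common $\beta_A>0$; by Remark \ref{notahnbn2}, any mesh satisfying $h_n B_n(2,2)\leq \beta_A$ for every $n$ admits a nontrivial collocation solution on $[0,T]$, which is existence for fine meshes.

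For the second bullet, the hypothesis that $y/G(y)$ is unbounded away from zero produces a sequence $y_k\to +\infty$ with $G(y_k)/y_k\to 0$; together with $G(y)/y$ unbounded near zero, the intermediate value theorem yields a positive fixed point of $y\mapsto G(\alpha+\beta y)$ for every $\alpha\geq 0$ and every $\beta>0$ (evaluate at $y=(y_k-\alpha)/\beta$ for large $k$ to obtain $G(\alpha+\beta y)<y$, and at a suitably small $y>0$ to obtain $G(\alpha+\beta y)>y$). Hence no upper bound on $\beta=h_n B_n(2,2)$ is needed in the inductive step of Proposition \ref{newprop4}, and the induction closes for any mesh, giving unconditional existence. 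For the final uniqueness statement, a one-step argument suffices: if $G(y)/y$ is strictly decreasing, then for $\alpha\geq 0$ and $\beta>0$ the map $y\mapsto G(\alpha+\beta y)/y = \frac{G(\alpha+\beta y)}{\alpha+\beta y}\cdot\frac{\alpha+\beta y}{y}$ is a product of two strictly decreasing positive functions on $(0,+\infty)$ and hence strictly decreasing, so $G(\alpha+\beta y)=y$ has at most one positive solution; induction over $n$, combined with the explicit formula $Z_{n,1}=G(F_n(t_{n,1}))$, pins down the entire nontrivial collocation solution.

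The main obstacle I anticipate is the discrete Gronwall step in the first bullet: the coefficient $A$ in (\ref{coefa2}) couples $F_n(t_n+c_2\epsilon)$ with the extra term $\epsilon\, G(F_n(t_n+c_1\epsilon))\int_0^{c_2} K(t_{n,2},t_n+s h_n)L_1(s)\,\mathrm{d}s$, so the bootstrap has to bound $F_n$ and $G\circ F_n$ simultaneously. The linear-growth estimate $G(y)\leq M y + G(y_0)$ must be inserted exactly once (before closing the loop) in order to linearise the recursion in $\max_i Z_{n,i}$, and the convolution structure is indispensable for casting $F_n$ as a genuine discrete convolution to which the classical Gronwall inequality applies.
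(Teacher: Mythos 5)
Your overall strategy --- iterate the inductive step of Proposition \ref{newprop4}, obtain a mesh-independent bound on the coefficient $A$ of (\ref{coefa2}) via a linear growth estimate and a discrete Gronwall argument in the convolution case, and remove the restriction on $\beta=h_nB_n(2,2)$ altogether in the unconditional case --- is precisely the adaptation of \cite{BB11} that the paper intends (the paper offers no proof beyond pointing to Remark \ref{notahnbn2} and the new $A$), and your fixed-point computation for arbitrary $\alpha\geq 0$, $\beta>0$ and your monotonicity argument for uniqueness are correct (modulo the harmless slip that for $\alpha=0$ the factor $(\alpha+\beta y)/y=\beta$ is constant rather than strictly decreasing).

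There is, however, a genuine gap in your unconditional-existence bullet. The induction of Proposition \ref{newprop4} uses the smallness of $h_n$ in \emph{two} places, not one: besides the constraint $h_nB_n(2,2)\leq\beta_A$, it also takes $h_n$ small in order to verify $Z_{n,2}\geq(1-c_2)Z_{n,1}$, i.e.\ the positivity of $z_h$ on $[0,t_{n+1}]$, via the limit $G(F_n(t_{n,2}))\rightarrow G(F_n(t_{n,1}))$ as $h_n\rightarrow 0^+$. That positivity is not cosmetic: it is what guarantees at the next step that the lag terms are nonnegative (so that Lemma \ref{lema1}-\textit{(ii)} applies with $0\leq\alpha$; recall $G$ is only defined on $\left[0,+\infty\right[$) and that $A$ is an upper bound for them. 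Your argument eliminates the first constraint but says nothing about the second, so the claim that ``the induction closes for any mesh'' is not justified under the mere continuity hypothesis on $t\mapsto K(t,s)$. To close it you need either the stronger monotonicity hypothesis of \cite{BB11}, namely $K(t,s)\leq K(t',s)$ for $t<t'$, which gives $F_n(t_{n,2})\geq F_n(t_{n,1})$ and hence positivity for \emph{every} $h_n$, or else to carry positivity as a standing assumption, which is exactly what the paper does in Proposition \ref{prop6}. The fine-mesh bullet does not suffer from this defect, since there the stepsizes are small anyway; but in that bullet the Gronwall step is asserted rather than executed, and to make it precise you should (i) convert the bound on $\max_iZ_{n,i}$ into a bound on the supremum of $|z_h|$ over each subinterval, at the cost of the Lebesgue-constant factor $\max_{v\in[0,1]}(|L_1(v)|+|L_2(v)|)$ depending only on $c_2$, and (ii) check that the resulting bound depends only on $T$, $M$, $G(y_0)$, $\int_0^Tk$ and $c_2$, so that a single $\beta_A$, and hence a single stepsize threshold valid for all $n$, exists.
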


Moreover, as in Proposition \ref{prop4b}, we can state a result
about unconditional existence without hypothesis on the kernel:

\begin{proposition}
\label{prop6}
Let $\frac{G\left( y\right) }{y}$ be
unbounded near zero.

If $\frac{y}{G(y)}$ is unbounded away from zero and there is a
\textbf{positive} nontrivial collocation solution in $\left[
  0,t_n\right] $, then there is a nontrivial collocation solution in
$\left[ 0,t_{n+1}\right] $ \textbf{for any} $h_n>0$.

If, in addition, $\frac{G\left( y\right) }{y}$ is a strictly
decreasing function, then there is at most one nontrivial collocation
solution.
\end{proposition}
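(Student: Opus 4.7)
The plan is to follow the inductive structure of the forward implication in Proposition \ref{newprop4}, but at the crucial step of solving for $Z_{n,2}$ to replace the local application of Lemma \ref{lema1}-\textit{(ii)}, which constrained $h_n$ to be small, by a global Intermediate Value Theorem argument valid for arbitrary $h_n>0$. Since the given collocation solution on $\left[ 0,t_n\right] $ is positive, the lag terms in (\ref{eqlag0}) are finite and non-negative; hence $Z_{n,1}:=G\left( F_n\left( t_{n,1}\right) \right) $ is automatically well-defined and non-negative by (\ref{eqzn12}), and it remains only to exhibit a strictly positive fixed point of
\[
\varphi (y):=G\left( \alpha +\beta y\right) ,\qquad \alpha :=F_n\left( t_{n,2}\right) +h_nB_n\left( 2,1\right) Z_{n,1}\geq 0,\qquad \beta :=h_nB_n\left( 2,2\right) >0.
\]

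The heart of the argument is then the following claim: if $G(y)/y$ is unbounded near zero and $y/G(y)$ is unbounded away from zero, then for every $\alpha \geq 0$ and every $\beta >0$ the continuous function $\varphi $ admits a strictly positive fixed point. Setting $f(y):=\varphi (y)-y$, I would first exhibit a small $y^{*}>0$ with $f\left( y^{*}\right) >0$: if $\alpha >0$ one has $f(0)=G(\alpha )>0$ directly, while if $\alpha =0$ the unboundedness of $G(y)/y$ near zero furnishes arbitrarily small $y^{*}$ with $G\left( \beta y^{*}\right) >y^{*}$. Next, to produce a large $y_1$ with $f\left( y_1\right) <0$, I would fix any $M>\beta $ and invoke the unboundedness of $y/G(y)$ away from zero to pick $w$ arbitrarily large with $G(w)<w/M$; choosing $w$ so that also $w>M\alpha /(M-\beta )$ makes $y_1:=(w-\alpha )/\beta $ strictly positive and yields $G\left( \alpha +\beta y_1\right) =G(w)<w/M<y_1$. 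Continuity of $\varphi $ and the IVT then deliver the required positive fixed point between $y^{*}$ and $y_1$.

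For the uniqueness part, assume in addition that $G(y)/y$ is strictly decreasing. If $y_1<y_2$ were two strictly positive fixed points of $\varphi $, setting $w_i:=\alpha +\beta y_i$ one would obtain
\[
\frac{G\left( w_i\right) }{w_i}=\frac{y_i}{w_i}=\frac{1}{\beta }-\frac{\alpha }{\beta w_i},
\]
which is a strictly increasing function of $w_i$, contradicting the strict decrease of $G(y)/y$. Hence each inductive step produces a unique strictly positive $Z_{n,2}$, and the same reasoning applied at $n=0$ (with $\alpha =0$) selects a unique positive $Z_{0,2}$, so the whole nontrivial collocation solution is unique.

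The main obstacle is the quantitative choice of $w$ forcing $f\left( y_1\right) <0$ uniformly in arbitrary $\alpha \geq 0$ and arbitrary $\beta >0$: the constraint $M>\beta $ is precisely what reconciles the two requirements $y_1>0$ and $G(w)<y_1$, and one has to verify that the set of large $w$ satisfying $G(w)<w/M$ is unbounded above (so that the extra constraint $w>M\alpha /(M-\beta )$ can be met simultaneously), which follows immediately from the hypothesis on $y/G(y)$. The rest is a routine adaptation of the reasoning already used for Proposition \ref{newprop4}.
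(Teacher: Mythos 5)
Your proof is correct and follows the route the paper itself indicates: Proposition \ref{prop6} is stated without an explicit proof, deferring to the inductive scheme of Proposition \ref{newprop4} and to the adaptation of the unconditional-existence results of \cite{BB11}, and your global intermediate-value argument (a positive fixed point of $y\mapsto G(\alpha+\beta y)$ exists for \emph{every} $\beta>0$ once $y/G(y)$ is unbounded away from zero, with $G(y)/y$ unbounded near zero handling the case $\alpha=0$) is precisely the mechanism behind those results. The uniqueness step via the strict monotonicity of $G(y)/y$ along the line $w=\alpha+\beta y$ is likewise the standard one.
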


\subsection{Nondivergent existence and uniqueness}

Our interest is the study of existence of nontrivial collocation
solutions using meshes $I_h$ with arbitrarily small $h>0$. So, we are
not interested in collocation problems whose collocation solutions
``escape'' to $+\infty $ when a certain $h_n\rightarrow 0^+$, since
this is a divergence symptom.  Following this criterion, we define the
concept of ``nondivergent existence'':

Let $0=t_0<\ldots <t_n$ be a mesh such that there exist nontrivial
collocation solutions, and let $S_{h_n}$ be the index set of the
nontrivial collocation solutions of the corresponding collocation
problem with mesh $t_0<\ldots <t_n<t_n+h_n$. Given $s\in S_{h_n}$, we
denote by $Z_{s;n,i}$ the coefficients of the corresponding nontrivial
collocation solution verifying equations (\ref{eqznigeneral}). Then,
we say that there is \textit{nondivergent existence in $t_n^+$} if
\[
\inf _{s\in S_{h_n}} \left\{ \max _{i=1,\ldots
    ,m}\left\{ Z_{s;n,i}\right\} \right\}
\]
exists for small enough $h_n>0$ and it does not diverge to $+\infty $
when $h_n\rightarrow 0^+$.  Given a mesh $I_h= \left\{
  0=t_0<\ldots<t_N\right\} $ such that there exist nontrivial
collocation solutions, we say that there is \textit{nondivergent
  existence} if there is nondivergent existence in $t_n^+$ for
$n=0,\ldots, N$.  See \cite{BB11} for a more detailed analysis, where
we also define the concept of \textit{nondivergent uniqueness}.

In \cite{BB11} it is proved the next result:

\begin{proposition}
\label{prop7}
In cases 1 and 2 with existence of nontrivial collocation solutions,
there is nondivergent existence if and only if $\frac{G\left( y\right)
}{y}$ is unbounded near zero.

If, in addition, $G$ is ``well-behaved'', then there is nondivergent
uniqueness.
\end{proposition}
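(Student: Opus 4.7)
The plan is to mimic the corresponding proof given in \cite{BB11}, replacing the use of local boundedness of $K$ by the new \textit{general conditions}. The pivotal property in \cite{BB11} was that $\lim_{h_n\to 0^+} h_n B_n = 0$ in Case 1 and $\lim_{h_n\to 0^+} h_n B_n(i,j) = 0$ in Case 2; under our weaker hypotheses these limits are recovered in Remarks \ref{notahnbn} and \ref{notahnbn2}, so the old argument goes through essentially verbatim once those remarks are substituted in. As in Propositions \ref{prop1} and \ref{newprop4}, the bulk of the proof is an induction on $n$.

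For the $(\Leftarrow )$ direction, I would fix $n$ together with a nontrivial collocation solution on $[0,t_n]$, and analyse the behaviour of the possible $Z_{s;n,i}$ as $h_n\to 0^+$. In Case 1 the lag term $F_n(t_{n,1})$ converges to the finite value $F_n(t_n)$, while $h_n B_n\to 0$ by Remark \ref{notahnbn}. Writing $\alpha:=F_n(t_{n,1})$ (uniformly bounded by some $A\geq 0$) and $\beta:=h_n B_n$, Lemma \ref{lema1} (ii) supplies a branch of strictly positive fixed points of $G(\alpha+\beta y)$ that converges to $G(F_n(t_n))$; selecting this branch as $Z_{s;n,1}$ keeps $\max_i Z_{s;n,i}$ bounded, so the infimum defining nondivergent existence in $t_n^+$ stays finite. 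Case 2 is handled identically, with $\beta:=h_n B_n(2,2)$, with $A$ taken as in (\ref{coefa2}), and with Remark \ref{notahnbn2} in place of Remark \ref{notahnbn}.

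For the $(\Rightarrow )$ direction, I would argue contrapositively. If $\frac{G(y)}{y}$ is bounded near zero, say $G(y)\leq M y$ on $(0,\delta ]$, then a strictly positive fixed point $Z$ of $G(\alpha+\beta y)$ with $\alpha+\beta Z\leq \delta$ must satisfy $Z\leq M\alpha/(1-M\beta)$, which is incompatible with a genuinely nontrivial branch (it collapses to the trivial solution at the initial step, where $\alpha=0$, and is pinned to the continuation of the previously chosen coefficients otherwise). Any remaining fixed point must therefore lie in the range $\alpha+\beta Z>\delta$, forcing $Z\geq (\delta-\alpha)/\beta\to +\infty$ as $\beta=h_n B_n\to 0^+$ (resp. $h_n B_n(2,2)\to 0^+$) by Remarks \ref{notahnbn} and \ref{notahnbn2}. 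Hence every $Z_{s;n,i}$ diverges and nondivergent existence fails.

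For the second statement, the well-behavedness hypothesis of \cite{BB11} guarantees uniqueness of the bounded fixed-point branch chosen in the $(\Leftarrow )$ argument, and the same inductive selection then yields nondivergent uniqueness. The main obstacle I anticipate is the inductive bookkeeping at each step: one has to track, simultaneously, how $F_n(t_{n,i})$ depends on $h_n$ and on the previously chosen coefficients $Z_{l,i}$ ($l<n$), and verify that the branch picked inductively remains strictly positive and bounded in the limit $h_n\to 0^+$. This is precisely the place where Remarks \ref{notahnbn} and \ref{notahnbn2} must be invoked repeatedly to do the work that local boundedness of $K$ did in \cite{BB11}.
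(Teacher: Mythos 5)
The paper itself offers no proof of Proposition \ref{prop7}: it is quoted from \cite{BB11} with the single sentence ``In \cite{BB11} it is proved the next result'', the implicit justification for its validity under the new \emph{general conditions} being exactly what you identify, namely that the only ingredient of the \cite{BB11} argument that used local boundedness of $K$ is the limit $\lim_{h_n\to 0^+}h_nB_n=0$ (resp.\ $h_nB_n(2,j)$), re-established in Remarks \ref{notahnbn} and \ref{notahnbn2}. So your overall strategy matches the authors' intent, and your contrapositive argument for $(\Rightarrow)$ is the genuine crux: at a step where the lag term vanishes (in particular $n=0$), boundedness of $G(y)/y$ on $]0,\delta]$ forces every nonzero fixed point of $y\mapsto G(\beta y)$ to satisfy $\beta Z>\delta$, hence $Z>\delta/\beta\to+\infty$, killing nondivergent existence at $t_0^+$.

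Two points deserve tightening. First, Lemma \ref{lema1}\textit{(ii)} only asserts the \emph{existence} of nonzero fixed points, and its hypothesis is that $G(y)/y$ is unbounded on all of $]0,+\infty[$; it does not by itself supply a branch converging to $G(F_n(t_n))$. You need a separate (easy) intermediate-value argument: for $\alpha>0$, $\phi(y)=G(\alpha+\beta y)-y$ satisfies $\phi(0)=G(\alpha)>0$ and $\phi(G(\alpha)+1)<0$ for small $\beta$ by continuity of $G$ alone, so a fixed point bounded by $G(\alpha)+1$ exists \emph{regardless} of the near-zero behaviour of $G(y)/y$. Consequently the whole dichotomy ``nondivergent iff $G(y)/y$ unbounded near zero'' lives at the steps with $\alpha=0$ (the initial step above all), where a \emph{bounded} nonzero fixed point of $G(\beta y)$, namely $Z=G(u)$ with $G(u)/u=1/\beta$ and $u\to 0^+$, exists precisely when $G(y)/y$ is unbounded \emph{near zero} rather than merely unbounded; your sketch never isolates where the ``near zero'' qualifier is actually used. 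Second, your parenthetical in the $(\Rightarrow)$ direction about the branch being ``pinned to the continuation of the previously chosen coefficients'' is not needed: failure of nondivergence at $t_0^+$ already negates nondivergent existence by definition. These are presentational rather than substantive defects.
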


We say that $G$ is ``well-behaved'' if $\frac{G\left( \alpha +y\right)
}{y}$ is strictly decreasing near zero for all $\alpha >0$. Note that
this condition is very weak (see \cite{BB11}).

So, taking into account Propositions \ref{prop1}, \ref{newprop4} and
\ref{prop7}, the main result of this paper is:

\begin{theorem}
\label{teo1}
(Hypothesis only for case 2 with $c_2\neq 1$: the map $t\mapsto
K\left( t,s\right) $ is continuous in $\left] s,\epsilon \right[ $ for
some $\epsilon >0$ and for all $0\leq s<\epsilon $.)

There is nondivergent existence near zero if and only if
$\frac{G\left( y\right) }{y}$ is unbounded near zero.

If, in addition, $G$ is ``well-behaved'' then, there is nondivergent
uniqueness near zero.
\end{theorem}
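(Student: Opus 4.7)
The plan is to assemble Theorem \ref{teo1} directly from the three previously established results: Propositions \ref{prop1} and \ref{newprop4} (which characterize existence near zero in Cases 1 and 2), together with Proposition \ref{prop7} (which characterizes nondivergent existence among collocation problems already known to admit nontrivial solutions). No genuinely new argument is needed; the only subtlety is to keep straight the difference between ``$\frac{G(y)}{y}$ unbounded in $\left]0,+\infty\right[$'' (the condition appearing in Propositions \ref{prop1} and \ref{newprop4}) and ``$\frac{G(y)}{y}$ unbounded near zero'' (the condition appearing in Proposition \ref{prop7} and in the theorem statement).

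For the implication ($\Leftarrow$), I start from the assumption that $\frac{G(y)}{y}$ is unbounded near zero. This trivially forces unboundedness of $\frac{G(y)}{y}$ in $\left]0,+\infty\right[$, so Proposition \ref{prop1} in Case 1 (respectively Proposition \ref{newprop4} in Case 2, under the extra continuity hypothesis on $K$ when $c_2\neq 1$) yields existence of nontrivial collocation solutions near zero. Once such solutions are known to exist, the hypothesis of Proposition \ref{prop7} is met; its ``if'' direction then upgrades mere existence to nondivergent existence, precisely because the assumed unboundedness of $\frac{G(y)}{y}$ near zero is what that proposition requires.

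For the implication ($\Rightarrow$), I argue that nondivergent existence near zero entails, a fortiori, plain existence near zero, so Proposition \ref{prop7} applies again; its ``only if'' direction delivers the unboundedness of $\frac{G(y)}{y}$ near zero at once. The additional nondivergent--uniqueness statement under the ``well-behaved'' hypothesis is a direct transcription of the second half of Proposition \ref{prop7}, combined with the existence supplied by Propositions \ref{prop1} or \ref{newprop4} as above.

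The main (and essentially only) obstacle is bookkeeping: making sure the two variants of ``unbounded'' are invoked with the correct direction of implication, and that the continuity hypothesis on $K$ required by Proposition \ref{newprop4} in the Case 2 with $c_2\neq 1$ branch is carried through verbatim into the theorem's hypothesis list. Beyond that, the proof reduces to a short chain of citations.
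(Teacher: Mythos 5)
Your proposal is correct and follows exactly the route the paper intends: the paper itself presents Theorem \ref{teo1} as an immediate consequence of Propositions \ref{prop1}, \ref{newprop4} and \ref{prop7}, with no further argument. Your bookkeeping of the two notions of unboundedness (near zero versus on $\left]0,+\infty\right[$) and of the kernel hypothesis for case 2 with $c_2\neq 1$ is the only nontrivial content, and you handle it with the correct directions of implication.
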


In the same way, we can combine Propositions \ref{prop4b} and
\ref{prop7} obtaining a result about nondivergent existence and
uniqueness without hypothesis on the kernel, and, as Theorem
\ref{teo1}, it can be useful to study numerically problems with
decreasing convolution kernels in case 2 with $c_2\neq 1$.

We can also combine Propositions \ref{prop10}, \ref{prop11} and
\ref{prop6} with Proposition \ref{prop7}, obtaining results about
nondivergent existence and uniqueness (for fine meshes and
unconditional) that are useful for identifying collocation
problems without blow-up. Some of these results can be reformulated as
necessary conditions for the existence of blow-up (see Section
\ref{sec5}).

\section{Blow-up collocation solutions}
\label{sec4}

In this section we will extend the concept of collocation problem and
collocation solution in order to consider the case of ``blow-up collocation
solutions''.

\begin{definition}
\label{colproblow}
  We say that a collocation problem is a \textit{blow-up collocation
    problem} (or has a blow-up) if the following conditions are held:
  \begin{enumerate}
  \item There exists $T>0$ such that there is no collocation solution
    in $I=\left[ 0,T\right] $ for any mesh $I_h$.
  \item Given $M>0$ there exists $0<\tau<T$, and a
    collocation solution $z_h$ defined on $\left[ 0,\tau \right]$ such
    that $|z_h(t)|> M$ for some $t\in [0,\tau]$.
  \end{enumerate}
\end{definition}
We can not speak about ``blow-up collocation solutions'' in the
classic sense, since ``collocation solutions'' are defined in compact
intervals and obviously they are bounded; so, we have to extend first
the concept of ``collocation solution'' to open intervals $I=\left[
  0,T\right[$ before we are in position to define the notion of ``blow-up
collocation solution''.

\begin{definition}
\label{colblow}
  Let $I:=[0,T[$ and $I_h$ be an infinite mesh given by a strictly
  increasing sequence $\left\{ t_n\right\} _{n=0}^{+\infty }$ with
  $t_0=0$ and convergent to $T$.
  \begin{itemize}
  \item A \textit{collocation solution on $I$} using the mesh $I_h$ is
    a function defined on $I$ such that it is a collocation solution
    (in the classic sense) for any finite submesh $\left\{
      t_n\right\}_{n=0}^N$ with $N\in \mathbb{N}$.
  \item A collocation solution on $I$ is a \textit{blow-up
      collocation solution} (or has a blow-up) with \textit{blow-up
      time} $T$ if it is unbounded.
  \end{itemize}
\end{definition}

\begin{remark}
\label{rem41}
In \cite{YB13} it is defined the concept of \textit{collocation
  solution with adaptive stepsize} that uses an infinite mesh
$I_h$. It is stated that this kind of collocation solutions
\textit{blows up} in finite time if
\[
T_b(I_h):=\lim _{n\rightarrow \infty}t_n=\lim _{n\rightarrow
  \infty}\sum_{i=0}^{n}h_i<\infty ,
\]
without imposing any condition about unboundedness on the collocation
solution. In this case, $T_b(I_h)$ is called the \textit{numerical
  blow-up time}. According to this definition, we could remove the
second point in Definition \ref{colproblow}. However, this definition
of \textit{numerical blow-up time} in \cite{YB13} is made in the
framework of a Lipschitz nonlinearity $G$ in the non-homogenous case,
and it is proved (in case 1) that the corresponding collocation
solution is effectively unbounded if we choose a given adaptive
stepsize that depends on the Lipschitz constant of $G$. Nevertheless,
we can not use this framework in our case (homogeneous case), because
supposing a Lipschitz-continuous nonlinearity $G$ would imply that the
unique solution of \eqref{eq0} or \eqref{eq2} is the trivial one. On
the other hand, if we consider a non-Lipschitz nonlinearity, then we
can not assure that the collocation solution is unbounded since the
adaptive stepsize given in \cite{YB13} becomes zero in the homogeneous
case. Therefore, in Definition \ref{colproblow} it is necessary the
condition about unboundedness, and in Definition \ref{colblow} we
impose explicitly that the collocation solution has to be unbounded.
\end{remark}

Given a collocation problem with nondivergent uniqueness near zero, a
necessary condition for the nondivergent collocation solution to
blow-up is that there is neither existence for fine meshes nor
unconditional existence. So, for example, given a convolution kernel
$K(t,s)=k(t-s)$, in cases 1 and 2 we must require that $\frac{G\left(
    y\right) }{y}$ is unbounded away from zero (moreover, in case 2
with $c_2\neq 1$, we must demand that there exists $\epsilon >0$ such
that $k$ is continuous in $\left] 0,\epsilon \right[ $).

For instance, in \cite{Okra07} it is studied equation (\ref{eq0}) with
convolution kernel $k(t-s)=(t-s)^{\beta }$, $\beta \geq 0$, and
nonlinearity $G(y)=t^{\alpha }$, $0<\alpha <1$, concluding that the
nontrivial (exact) solution does not have blow-up. If we consider a
collocation problem with the above kernel and nonlinearity in cases 1
and 2, then we can ensure unconditional nondivergent uniqueness (by
Propositions \ref{prop10}, \ref{prop11} and \ref{prop7}); thus, we can
also conclude that the nondivergent collocation solution has no
blow-up. Actually, we reach the same conclusion considering any kernel
satisfying the general conditions (in case 1 and case 2 with $c_2=1$)
and such that $t\mapsto K\left( t,s\right) $ is continuous in $\left]
  s,\epsilon \right[ $ for some $\epsilon >0$ and for all $0\leq
s<\epsilon $ (in case 2 with $c_2\neq 1$). In Section \ref{sec5} we
discuss in more detail the relationships between necessary conditions
for blow-up of collocation solutions and exact solutions.

\subsection{Numerical algorithms}
\label{sec:algoritmos}
First we need to recall the definition of \textit{existence near
  zero}, mentioned at the beginning of Section \ref{sec3}, and given
in \cite{BB11}. It is equivalent to the existence of a nontrivial
\textit{collocation solution with adaptive stepsize}, whose definition
is given in \cite{YB13}.

\begin{definition}
  We say that there is \textit{existence near zero} if there exists
  $H_0>0$ such that if $0<h_0\leq H_0$ then there are nontrivial
  collocation solutions in $\left[ 0,t_1\right] $; moreover, there
  exists $H_n>0$ such that if $0<h_n\leq H_n$ then there are
  nontrivial collocation solutions in $\left[ 0,t_{n+1}\right] $ (for
  $n=1,\ldots ,N-1$ and given $h_0,\ldots ,h_{n-1}>0$ such that there
  are nontrivial collocation solutions in $\left[ 0,t_n\right]
  $). Note that, in general, $H_n$ depends on $h_0,\ldots ,h_{n-1}$.
\end{definition}

Given a blow-up collocation problem with nondivergent uniqueness near
zero (always in cases 1 and 2), we are going to describe a general
algorithm to compute the nontrivial collocation solution and estimate
the blow-up time. Given stepsizes $h_0,\ldots ,h_n$,
the collocation solution at $t_{n+1}$ is obtained from the attracting
fixed point of a certain function $y\mapsto G(\alpha+\beta y)$, given
by equation (\ref{eqzn11}) in case 1, or (\ref{eqzn2}) in case
2. Therefore, the key point in the algorithm is to decide whether
there is a fixed point or not and, if so, estimate it.

In order to check that there is no fixed point, the most straightforward
technique consists on iterating the function and, if a certain bound (which may
depend on the fixed points found in the previous steps) is overcomed,
then it is assumed that there is no fixed point.

So, if for a certain $n$ there is no fixed point, then a smaller $h_n$
should be taken (e.g.  $h_n/2$ is used in the examples below), and
this procedure is repeated with the new $\alpha$ and $\beta $
corresponding to this new value of $h_n$. If $h_n$ becomes smaller
than a given tolerance (e.g. $10^{-12}$ in our examples), the
algorithm stops and $t_n=h_0+\ldots +h_{n-1}$ is the estimation of the
blow-up time.

Finally, we need to determine wether the obtained collocation solution
is unbounded or not, but it is not strictly possible since we have
computed the collocation solution only for a finite mesh. Therefore,
we will check if the collocation solution overcomes a previously fixed
threshold bound $M>0$.

To sum up, the algorithm consists mainly on:
\begin{enumerate}
\item Set $n=0$ and an initial stepsize $h_0(=h)$.
\item Check if there is a fixed point in (\ref{eqzn11}) (case 1) or
  (\ref{eqzn2}) (case 2), taking into account Section \ref{sec2}.
\item If there is fixed point, then compute it, set $n=n+1$,
  $h_n=h_{n-1}$, and repeat $2$.
\item If there is not fixed point, then repeat $2$ with a smaller
  stepsize $h_n$. If $h_n$ becomes smaller than a given tolerance, the
  algorithm ends and the blow-up time estimation is given by
  $t_n=h_0+\ldots +h_{n-1}$.
\end{enumerate}

Note that the initial stepsize $h_0$ is also denoted by $h$ because it
coincides with the diameter of the mesh.

\subsection{Examples}

We will consider the following examples:

\begin{enumerate}

\item $K\left( t,s\right) =1$ and $G\left( y\right) =\left\{
\begin{array}{lll}
\sqrt{y} & \, \, \textrm{if} & y\in \left[ 0,1\right] \\
\\
y^2 & \, \, \textrm{if} & y\in \left] 1,+\infty \right[
\end{array}
\right. $

\item $K\left( t,s\right) =1$ and $G\left( y\right) =\left\{
\begin{array}{lll}
\sqrt{y} & \, \, \textrm{if} & y\in \left[ 0,1\right] \\
\\
e^{y-1} & \, \, \textrm{if} & y\in \left] 1,+\infty \right[
\end{array}
\right. $

\item $K\left( t,s\right) =k(t-s)=t-s$ and $G\left( y\right) $ of Example 1.

\item $K\left( t,s\right) =k(t-s)=\frac{1}{\sqrt{\pi \left( t-s\right)
    }}$ and $G\left( y\right) $ of Example 1.

\end{enumerate}

In Example 1, the fixed points can be found analytically:
\begin{itemize}
\item If $0\leq \alpha <1$ and $1-\alpha <\beta \leq \frac{1}{4\alpha }$, then the fixed point
    is $\frac{1-2\alpha \beta -\sqrt{1-4\alpha \beta }}{2\beta ^2}$.
  \item If $0\leq \alpha <1$ and $\beta \leq 1-\alpha $, then the
    fixed point is $\frac{1}{2}\left( \beta +\sqrt{\beta ^2+4\alpha
      }\right) $.
  \item If $\alpha \geq 1$ and $\beta \leq \frac{1}{4\alpha }$, then
    the fixed point is $\frac{1-2\alpha \beta -\sqrt{1-4\alpha \beta
      }}{2\beta ^2}$.
\end{itemize}

If $\beta >\frac{1}{4\alpha }$ then there is no solution for the
corresponding $h_n$. Hence a smaller $h_n$ is taken as described in
the general case. However, both the computational cost and accuracy of
the general and specific algorithms are similar.

In the first two examples, the blow-up time of the corresponding
(exact) solution is $\hat{t}=3$, while in the other two the blow-up
time is unknown. In \cite{Okra10} it was studied a family of equations
to which Example 4 belongs.

In all examples, both $K$ and $G$ fulfill the \textit{general
  conditions} and the hypotheses of Theorem \ref{teo1}, and thus it is
ensured the nondivergent existence and uniqueness near zero. Note that
since the kernel in Example 4 is decreasing and unbounded near zero
this case is out of the scope of the study of \cite{BB11}.

On the other hand, in all examples, $\frac{G(y)}{y}$ is unbounded away
from zero and $\frac{y}{G(y)}$ is bounded away from zero, and hence
there can exist a blow-up (see Propositions \ref{prop10} and
\ref{prop11}).

We have found the numerical nondivergent collocation solutions for a
given diameter $h$ (it is in fact the initial stepsize $h_0$), using
the algorithms described in Section \ref{sec:algoritmos} (the specific
for the Example 1 and the general for the rest). In Figures
\ref{figblowup}, \ref{figblowup2}, \ref{figblowup3} and
\ref{figblowup4} the estimations of the blow-up time of the
collocation solutions for a given initial stepsize $h$ are depicted,
varying $c_1$ in case 1 or $c_2$ in case 2. Note that the different
graphs for different $h$ intersect each other in a fairly good
approximation of the blow-up time, as it is shown in Tables
\ref{tabla1c1}, \ref{tabla1c2}, \ref{tabla2c1} and \ref{tabla2c2}.
Moreover, we can use a more general technique consisting on
extrapolating the minimum ($m_1,m_2$) and maximum ($M_1,M_2$) times of
two results with different initial stepsizes $h_{01},h_{02}$
respectively, with $h_{01}>h_{02}$ (see Figure \ref{fig:extrapol}); in
this way the blow-up time estimation is given by
\begin{equation}
\label{eq:extrapol}
\hat{t}\approx M_1-\frac{(M_1-M_2)(M_1-m_1)}{(M_1-M_2)+(m_2-m_1)}.
\end{equation}

\begin{figure}[tbp]
\begin{center}
  \includegraphics[width=0.25\textwidth]{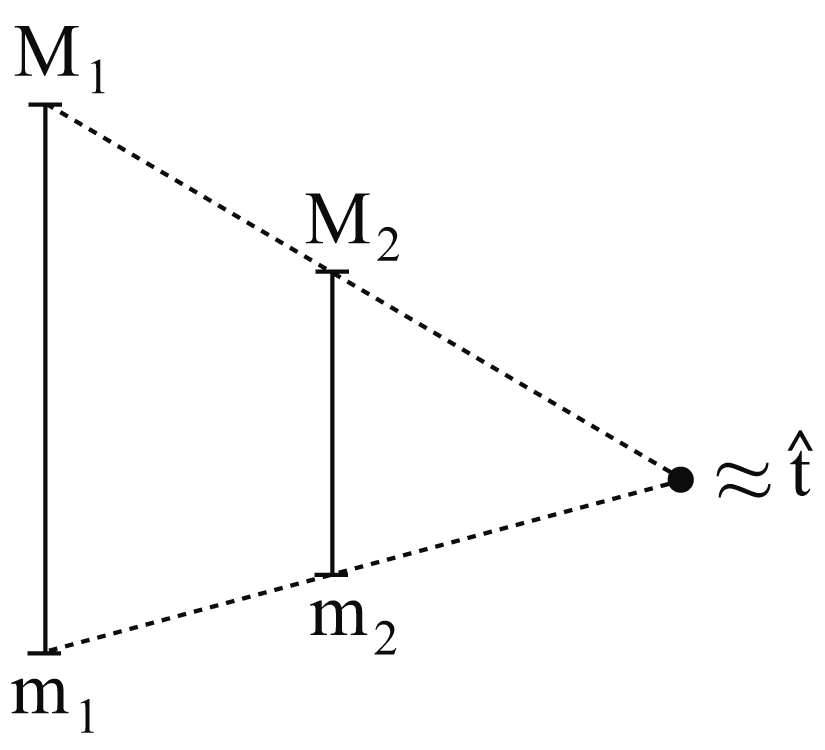}
\end{center}
\caption{Extrapolation technique given by
  \eqref{eq:extrapol}. $M_1,M_2$ and $m_1,m_2$ are the maximum and
  minimum times respectively, using initial stepsizes $h_{01}$ and
  $h_{02}$, with $h_{01}>h_{02}$.}
\label{fig:extrapol}
\end{figure}

In Examples 3 and 4 we do not know the exact value of the blow-up
time. However, in order to make a study of the relative error
analogous to the previous examples, we have taken as blow-up time for
Example 3 the approximation for $h=0.001$ in case 2 with $c_2=0.5$:
$t=5.78482$; in Example 4 the blow-up time value has been taken as the
approximation for $h = 0.0001$ in case 2 with $c_2=2/3$ (\textit{Radau
  I} collocation points): $t=1.645842$. Results are shown in Tables
\ref{tabla3c1}, \ref{tabla3c2}, \ref{tabla4c1} and \ref{tabla4c2}.

The relative error varying $c_1$ (case 1) or $c_2$ (case 2) is the
``relative vertical size'' of the graph, and it decreases at the same
rate as $h$. On the other hand, the relative error of the intersection
decreases faster, in some cases at the same rate as $h^2$.

Moreover, in case 1, the best approximations are obtained with
$c_1\approx 0.5$, and in case 2 with $c_2\approx 2/3$ (approximately
\textit{Radau I}) for Examples 1, 2 and 4 (see Tables
\ref{tabla1c105radau}, \ref{tabla2c105radau} and
\ref{tabla4c105radau}), while for Example 3 the best approximations
are obtained with $c_2\approx 0.5$; however, in Table
\ref{tabla3c105radau} are also shown the approximations and their
corresponding errors for the \textit{Radau I} collocation points.
On the other hand, the intersections technique offers better results,
but at a greater computational cost.

\begin{figure}[tbp]
\begin{center}
  \includegraphics[width=1\textwidth]{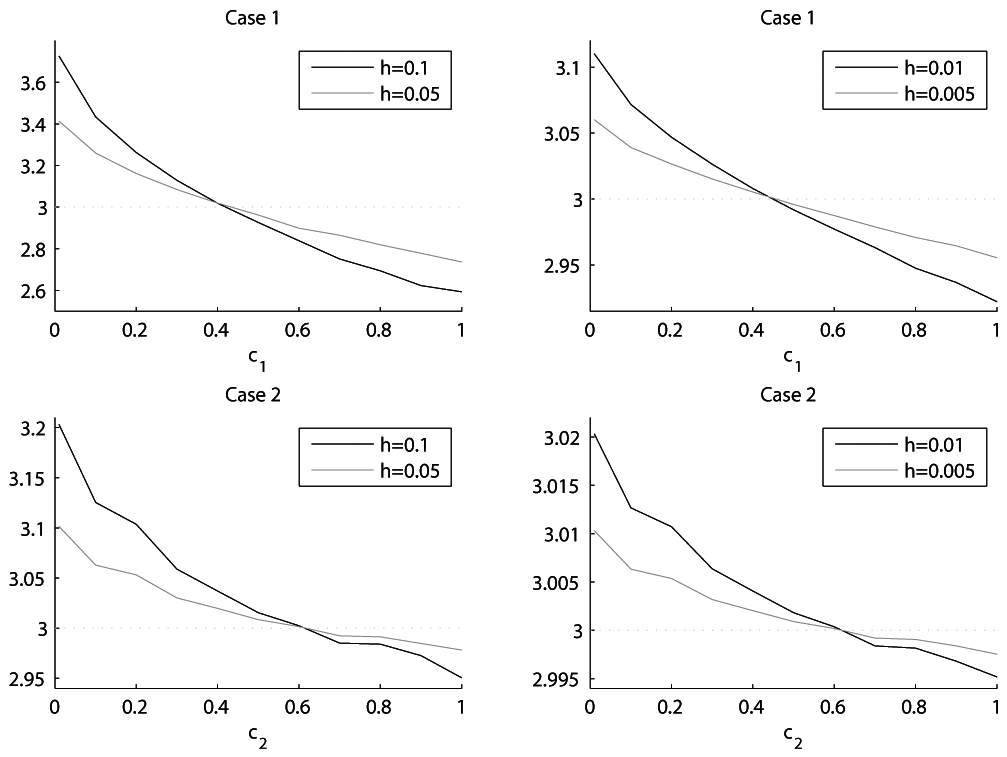}
\end{center}
\caption{Example 1. Numerical estimation of the blow-up time of
  collocation solutions varying $c_1$ (case 1) or $c_2$ (case 2), for
  different initial stepsizes $h$. The blow-up time of the
  corresponding (exact) solution is $\hat{t}=3$. The intersection of
  both curves gives a good approximation of the blow-up time.}
\label{figblowup}
\end{figure}

\begin{figure}[tbp]
\begin{center}
  \includegraphics[width=1\textwidth]{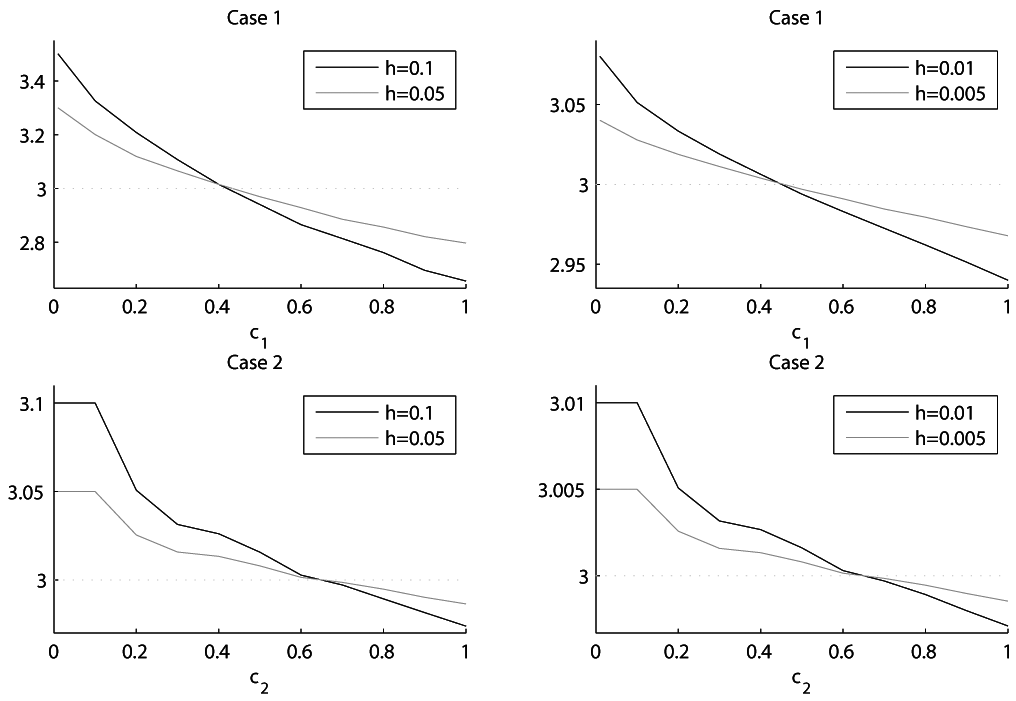}
\end{center}
\caption{Example 2. Numerical estimation of the blow-up time of
  collocation solutions varying $c_1$ (case 1) or $c_2$ (case 2), for
  different inital stepsizes $h$. The blow-up time of the
  corresponding (exact) solution is $\hat{t}=3$. The intersection of
  both curves gives a good approximation of the blow-up time.}
\label{figblowup2}
\end{figure}

\begin{figure}[tbp]
\begin{center}
  \includegraphics[width=1\textwidth]{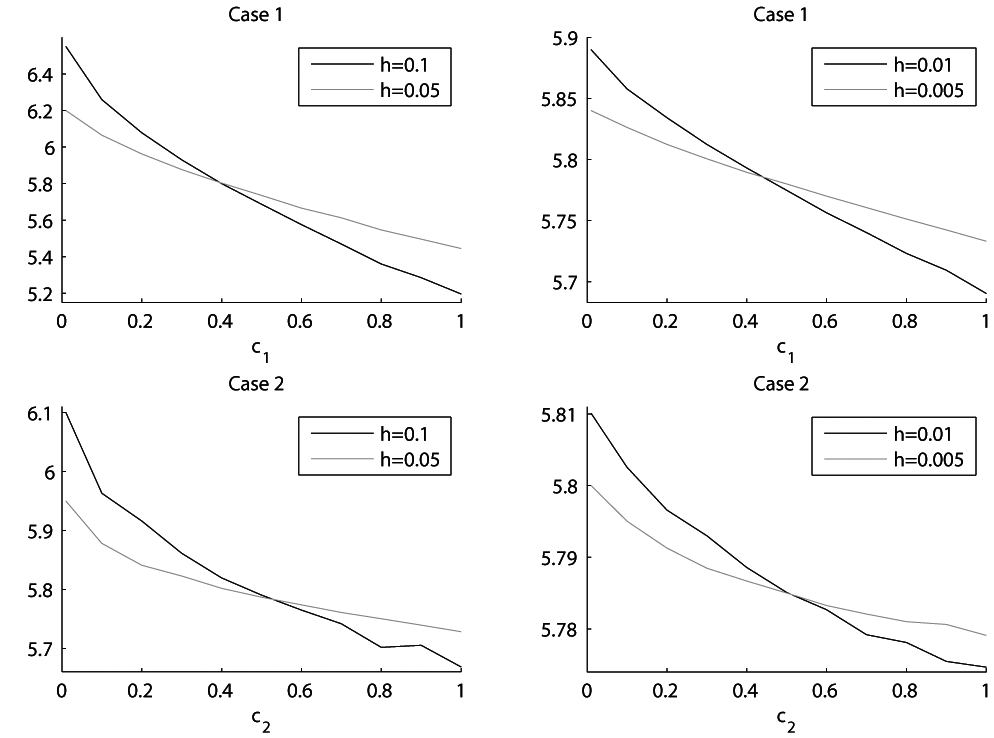}
\end{center}
\caption{Example 3. Numerical estimation of the blow-up time of
  collocation solutions varying $c_1$ (case 1) or $c_2$ (case 2), for
  different initial stepsizes $h$. The intersection of
  both curves gives a good approximation of the blow-up time.}
\label{figblowup3}
\end{figure}

\begin{figure}[tbp]
\begin{center}
  \includegraphics[width=1\textwidth]{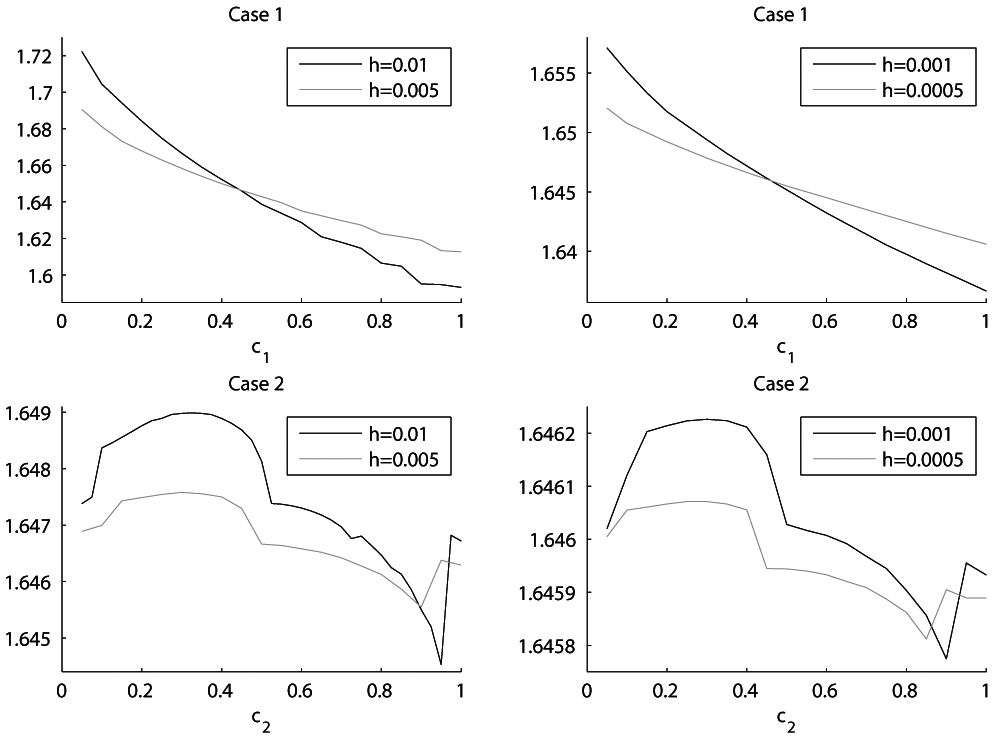}
\end{center}
\caption{Example 4. Numerical estimation of the blow-up time of
  collocation solutions varying $c_1$ (case 1) or $c_2$ (case 2), for
  different initial stepsizes $h$. The intersection of both curves
  gives a good approximation of the blow-up time.}
\label{figblowup4}
\end{figure}

\section{Discussion and comments}
\label{sec5}

The main necessary conditions for a collocation problem to have a
blow-up are obtained from Propositions \ref{prop10} (case 1) and
\ref{prop11} (case 2), and are mostly related to the nonlinearity,
since the assumption on the kernel ``the map $t\mapsto K\left(
  t,s\right) $ is continuous in $\left] s,\epsilon \right[ $ for some
$\epsilon >0$ and for all $0\leq s<\epsilon $'' is only required in
case 2 with $c_2\neq 1$ and it is a very weak hypothesis. Hence,
assuming that the kernel satisfies this hypothesis, the main necessary
condition for the existence of a blow-up is:
\begin{itemize}
\item[1. ] $\frac{y}{G(y)}$ is bounded away from zero.
\end{itemize}
In addition, for convolution kernels, there is another necessary condition:
\begin{itemize}
\item[2. ] $\frac{G(y)}{y}$ is unbounded away from zero, i.e. there
  exists a sequence $\left\{ y_n\right\} _{n=1}^{+\infty }$ of
  positive real numbers and divergent to $+\infty $ such that $\lim
  _{n\rightarrow +\infty} \frac{y_n}{G\left( y_n\right) }=0$.
\end{itemize}

In \cite{Myd99} it is given a necessary and sufficient condition for
the existence of blow-up (exact) solutions for equation (\ref{eq0}) with a
kernel of the form $K(t,s)=(t-s)^{\alpha -1}r(s)$ with $\alpha >0$,
$r$ nondecreasing and continuous for $t\neq 0$, $r(t)=0$ for $t\leq
0$, and $r(t)>0$ for $t>0$:
\begin{equation}
\label{condnecsuf}
\int _{\delta }^{+\infty } \left( \frac{s}{G(s)}\right) ^{1/\alpha }\frac{\textrm{d}s}{s}<+\infty ,\qquad \delta >0.
\end{equation}
This also holds for convolution kernels of Abel type
$K(t,s)=(t-s)^{\alpha -1}$ with $\alpha >0$, generalizing some results
given in \cite{Myd94}. Next we will show that necessary conditions 1
and 2 above mentioned are also necessary conditions for the integral
given in (\ref{condnecsuf}) to be convergent, and thus for the
existence of a blow-up (exact) solution.

\begin{proposition}
\label{propfin1}
If (\ref{condnecsuf}) holds, then $\frac{G(y)}{y}$ is unbounded away
from zero.
\end{proposition}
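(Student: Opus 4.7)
The plan is to argue by contrapositive: I will show that if $\frac{G(y)}{y}$ is \emph{bounded} away from zero, then the integral in \eqref{condnecsuf} diverges, which contradicts the hypothesis.

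Suppose $\frac{G(y)}{y}$ is bounded away from zero. By the definition of ``away from zero'', there exists $\tau>0$ and a constant $M>0$ such that
\[
\frac{G(y)}{y}\leq M \qquad \text{for all } y>\tau.
\]
Equivalently, $\frac{y}{G(y)}\geq \frac{1}{M}$ for all $y>\tau$, so $\left(\frac{y}{G(y)}\right)^{1/\alpha}\geq M^{-1/\alpha}$ on $(\tau,+\infty)$.

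Now I would split the integral at $\max\{\delta,\tau\}$ and estimate the tail from below:
\[
\int_{\delta}^{+\infty}\left(\frac{s}{G(s)}\right)^{1/\alpha}\frac{\mathrm{d}s}{s}\;\geq\;\int_{\max\{\delta,\tau\}}^{+\infty}\left(\frac{s}{G(s)}\right)^{1/\alpha}\frac{\mathrm{d}s}{s}\;\geq\;M^{-1/\alpha}\int_{\max\{\delta,\tau\}}^{+\infty}\frac{\mathrm{d}s}{s}=+\infty,
\]
contradicting the convergence hypothesis \eqref{condnecsuf}. Hence $\frac{G(y)}{y}$ must be unbounded away from zero.

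There is no real obstacle here: the whole argument is a one-line comparison with the divergent integral $\int^{+\infty}\mathrm{d}s/s$, exploiting the fact that the $1/s$ weight in \eqref{condnecsuf} is exactly critical. The only subtlety is to unwind the ``bounded/unbounded away from zero'' terminology introduced earlier, which simply means boundedness (or lack thereof) on some tail $(\tau,+\infty)$; once this is translated into a pointwise bound on $y/G(y)$, the conclusion is immediate.
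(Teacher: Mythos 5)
Your proof is correct and follows essentially the same route as the paper's: argue by contradiction/contraposition, translate boundedness of $G(y)/y$ on a tail into the pointwise lower bound $\left(\frac{s}{G(s)}\right)^{1/\alpha}\geq M^{-1/\alpha}$, and compare with the divergent integral $\int^{+\infty}\frac{\mathrm{d}s}{s}$. Your splitting of the integral at $\max\{\delta,\tau\}$ is in fact slightly more careful than the paper, which tacitly takes the bound to hold already for $y>\delta$.
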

\begin{proof}
  Let us suppose that $\frac{G(y)}{y}$ is bounded away from zero. So,
  there exists $M>0$ such that $\frac{G(y)}{y}<M$ for all $y>\delta
  $. Hence
\[
\int _{\delta }^{+\infty } \left( \frac{s}{G(s)}\right) ^{1/\alpha
}\frac{\textrm{d}s}{s} > \left( \frac{1}{M}\right) ^{1/\alpha }\int
_{\delta }^{+\infty } \frac{\textrm{d}s}{s}=+\infty .
\]
\end{proof}

\begin{proposition}
  If (\ref{condnecsuf}) holds, then $\frac{y}{G(y)}$ is bounded away
  from zero.
\end{proposition}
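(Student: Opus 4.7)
My plan is to proceed by contrapositive, in parallel with the proof of Proposition \ref{propfin1} just above. In the paper's terminology (compare the opening line of that proof, where the conclusion drawn from ``$G(y)/y$ is bounded away from zero'' is $G(y)/y<M$ for $y>\delta$), saying that $y/G(y)$ is bounded away from zero means there exist $M,\tau>0$ with $y/G(y)<M$ for all $y>\tau$. Its negation, as rephrased in the remark following Proposition \ref{prop10}, is the existence of a sequence $y_n\to+\infty$ such that $y_n/G(y_n)\to+\infty$ (equivalently $G(y_n)/y_n\to 0$). Under this assumption I aim to force the integral in (\ref{condnecsuf}) to diverge, contradicting the hypothesis.

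The key device is to pair the pointwise blow-up of $y_n/G(y_n)$ with a scale-invariant dyadic window and exploit the monotonicity of $G$. On $[y_n/2,y_n]$, strict increase of $G$ gives $G(s)\le G(y_n)$, while $s\ge y_n/2$, so
\[
\frac{s}{G(s)}\ \ge\ \frac{y_n/2}{G(y_n)}\ =\ \frac{1}{2}\cdot\frac{y_n}{G(y_n)}.
\]
Raising to the $1/\alpha$ power (valid since $\alpha>0$) and integrating against $\mathrm{d}s/s$ over the window (whose logarithmic length is $\ln 2$) gives
\[
\int_{y_n/2}^{y_n}\!\left(\frac{s}{G(s)}\right)^{1/\alpha}\!\frac{\mathrm{d}s}{s}\ \ge\ \left(\frac{y_n}{2\,G(y_n)}\right)^{1/\alpha}\ln 2,
\]
and by the choice of $y_n$ the right-hand side tends to $+\infty$ as $n\to+\infty$.

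To finish, I would discard the finitely many indices for which $y_n/2\le \delta$; for the remaining $n$ we have $[y_n/2,y_n]\subset(\delta,+\infty)$, hence $\int_{\delta}^{+\infty}(s/G(s))^{1/\alpha}\,\mathrm{d}s/s \ge \int_{y_n/2}^{y_n}(s/G(s))^{1/\alpha}\,\mathrm{d}s/s$. Letting $n\to+\infty$ forces the integral in (\ref{condnecsuf}) to be $+\infty$, the desired contradiction. I do not foresee any real obstacle: the one ingredient beyond what the proof of Proposition \ref{propfin1} already uses is the monotonicity of $G$, which converts the sparse divergence of $y/G(y)$ along a sequence into a uniform lower bound on an interval of fixed logarithmic length $\ln 2$.
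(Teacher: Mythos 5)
Your proof is correct, but it takes a genuinely different and more economical route than the paper's. The paper first invokes Proposition \ref{propfin1} to produce a sequence $\{y_n\}$ with $y_n/G(y_n)<2^{-\alpha}$, then uses the assumed unboundedness of $y/G(y)$ to interleave points $y'_n\in\left]y_n,y_{n+1}\right[$ with $y'_n/G(y'_n)=1$; the integral over each of the infinitely many disjoint windows $[y_n,y'_n]$ is bounded below by the fixed constant $\alpha/2$ (via $G(s)\leq G(y'_n)$ and the inequality $y'_n>G(y_n)$), and divergence follows by summing these contributions. You instead anchor a dyadic window $[y_n/2,y_n]$ at each point of a sequence along which $y_n/G(y_n)\to+\infty$ and observe that the contribution of a \emph{single} window, $\bigl(y_n/(2G(y_n))\bigr)^{1/\alpha}\ln 2$, already tends to $+\infty$; no interleaving, no crossing points, and no appeal to Proposition \ref{propfin1} are needed. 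Both arguments rest on the same two ingredients, the monotonicity of $G$ and the logarithmic measure $\mathrm{d}s/s$, but yours is self-contained and shorter, while the paper's decomposition has the minor merit of exhibiting a uniform per-interval lower bound tied quantitatively to the sequence it already constructed for Proposition \ref{propfin1}. All the steps you outline (reading the negation of ``bounded away from zero'' as a sequence with $y_n/G(y_n)\to+\infty$, consistent with the remark after Proposition \ref{prop10}; the pointwise bound $s/G(s)\geq y_n/(2G(y_n))$ on the window; and discarding the finitely many $n$ with $y_n/2\leq\delta$) check out.
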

\begin{proof}
  By Proposition \ref{propfin1}, $\frac{G(y)}{y}$ is unbounded away
  from zero and hence, there exists a strictly increasing sequence
  $\left\{ y_n\right\} _{n=1}^{+\infty }$ with $y_1>\delta $ and
  divergent to $+\infty $ such that $\frac{y_n}{G\left( y_n\right)
  }<\frac{1}{2^{\alpha }}$ for all $n$.

Let us suppose that $\frac{y}{G(y)}$ is unbounded away from zero; so,
we can choose $\left\{ y_n\right\} _{n=1}^{+\infty }$ such that there
exist $y'_n\in \left] y_n,y_{n+1}\right[ $ with $\frac{y'_n}{G\left(
    y'_n\right) }=1$ for each $n$. Moreover, since $G$ is positive and
strictly increasing, we have that $\frac{y'_n}{G\left( y_n\right)
}>\frac{y'_n}{G\left( y'_n\right) }=1$, and then $y'_n>G\left(
  y_n\right) $. Hence, we have
\begin{eqnarray*}
  \int _{y_n}^{y_{n+1}} \left( \frac{s}{G(s)}\right) ^{1/\alpha }\frac{\textrm{d}s}{s} &>& \int _{y_n}^{y'_n} \left( \frac{s}{G(s)}\right) ^{1/\alpha }\frac{\textrm{d}s}{s} >\int _{y_n}^{y'_n} \left( \frac{s}{G\left( y'_n\right) }\right) ^{1/\alpha }\frac{\textrm{d}s}{s} \\
  &=& \alpha \left( 1-\left( \frac{y_n}{y'_n}\right) ^{1/\alpha }\right) >\alpha \left( 1-\left( \frac{y_n}{G\left( y_n\right) }\right) ^{1/\alpha }\right) >\frac{\alpha }{2} .
\end{eqnarray*}
Therefore, (\ref{condnecsuf}) does not hold.
\end{proof}

The results presented here provide a guide for future research. An
interesting problem which is not fully resolved is to determine the
relationship between the existence of blow-up in exact solutions and
in collocation solutions.

Another open problem is to assure the
unboundedness of a collocation solution with adaptive stepsize (in the
sense of \cite{YB13}) using an infinite mesh $I_h$ with $\lim
_{n\rightarrow \infty}t_n < \infty$ in the general case of a
non-Lipschitz nonlinearity $G$ (see Remark \ref{rem41}).
In other words, if we can not arrive at a given time $T$ with any
collocation solution (i.e. using any mesh), then, is there a
collocation blow-up? If so, and in terms of Definition \ref{colproblow}, it would
be sufficient to hold only the first point for a collocation problem
to have a blow-up.

\newpage

\section*{Tables}

\begin{table}[h]
\begin{center}
\begin{tabular}{|c|c|c|c|c|c|c|c|}
  \hline
  Case 1 & \multicolumn{3}{|c|}{Varying $c_1$} & \multicolumn{2}{|c|}{Extrapolation} & \multicolumn{2}{|c|}{Intersection} \\
  \hline
  $h$ & min. $t$ & max. $t$ & Rel. error & $t$ & Rel. error & $t$ & Rel. error \\
  \hline
  \multirow{2}{*}{$0.1$} & $2.60$ & $3.66$ & \multirow{2}{*}{$4\cdot 10^{-1}$} & \multirow{4}{*}{$3.033$} & \multirow{4}{*}{$1.1\cdot 10^{-2}$} & & \multirow{4}{*}{$10^{-2}$} \\
  & \small{($c_1=1$)} & \small{($c_1=0.01$)} & & & & $3.03$ & \\
  \cline{1-4}
  \multirow{2}{*}{$0.05$} & $2.78$ & $3.40$ & \multirow{2}{*}{$2\cdot 10^{-1}$} & & & \small{($c_1=0.39$)} & \\
  & \small{($c_1=1$)} & \small{($c_1=0.01$)} & & & & & \\
  \hline
  \multirow{2}{*}{$0.01$} & $2.92$ & $3.11$ & \multirow{2}{*}{$6\cdot 10^{-2}$} & \multirow{4}{*}{$3.0044$} & \multirow{4}{*}{$1.5\cdot 10^{-3}$} & & \multirow{4}{*}{$7\cdot 10^{-4}$} \\
  & \small{($c_1=1$)} & \small{($c_1=0.01$)} & & & & $3.002$ & \\
  \cline{1-4}
  \multirow{2}{*}{$0.005$} & $2.96$ & $3.06$ & \multirow{2}{*}{$3\cdot 10^{-2}$} & & & \small{($c_1=0.44$)} & \\
  & \small{($c_1=1$)} & \small{($c_1=0.01$)} & & & & & \\
  \hline
\end{tabular}
\end{center}
\caption{Example 1. Numerical data of Figure \ref{figblowup} (case 1: $m=1$, $c_1>0$).}
\label{tabla1c1}
\end{table}

\begin{table}[h]
\begin{center}
\begin{tabular}{|c|c|c|c|c|c|c|c|}
  \hline
  Case 2 & \multicolumn{3}{|c|}{Varying $c_2$} & \multicolumn{2}{|c|}{Extrapolation} & \multicolumn{2}{|c|}{Intersection} \\
  \hline
  $h$ & min. $t$ & max. $t$ & Rel. error & $t$ & Rel. error & $t$ & Rel. error \\
  \hline
  \multirow{2}{*}{$0.1$} & $2.95$ & $3.20$ & \multirow{2}{*}{$8\cdot 10^{-2}$} & \multirow{4}{*}{$3.008$} & \multirow{4}{*}{$2.6\cdot 10^{-3}$} & & \multirow{4}{*}{$10^{-4}$} \\
  & \small{($c_2=1$)} & \small{($c_2=0.01$)} & & & & $3.0003$ & \\
  \cline{1-4}
  \multirow{2}{*}{$0.05$} & $2.98$ & $3.10$ & \multirow{2}{*}{$4\cdot 10^{-2}$} & & & \small{($c_2=0.618$)} & \\
  & \small{($c_2=1$)} & \small{($c_2=0.01$)} & & & & & \\
  \hline
  \multirow{2}{*}{$0.01$} & $2.995$ & $3.020$ & \multirow{2}{*}{$8\cdot 10^{-3}$} & \multirow{4}{*}{$3.0008$} & \multirow{4}{*}{$2.6\cdot 10^{-4}$} & & \multirow{4}{*}{$10^{-6}$} \\
  & \small{($c_2=1$)} & \small{($c_2=0.01$)} & & & & $3.000003$ & \\
  \cline{1-4}
  \multirow{2}{*}{$0.005$} & $2.998$ & $3.010$ & \multirow{2}{*}{$4\cdot 10^{-3}$} & & & \small{($c_2=0.623$)} & \\
  & \small{($c_2=1$)} & \small{($c_2=0.01$)} & & & & & \\
  \hline
\end{tabular}
\end{center}
\caption{Example 1. Numerical data of Figure \ref{figblowup} (case 2:$m=2$, $c_1=0$).}
\label{tabla1c2}
\end{table}

\begin{table}[h]
\begin{center}
\begin{tabular}{|c|c|c|c|c|}
  \hline
  \multirow{2}{*}{$h$} & \multicolumn{2}{|c|}{Case 1 - $c_1=0.5$} & \multicolumn{2}{|c|}{Case 2 - Radau I} \\
  \cline{2-5}
   & Blow-up & Rel. error & Blow-up & Rel. error \\
  \hline
  $0.1$ & $2.933883$ & $2.2\cdot 10^{-2}$ & $2.995253$ & $1.6\cdot 10^{-3}$ \\
  \hline
  $0.05$ & $2.965002$ & $1.2\cdot 10^{-2}$ & $2.997602$ & $8\cdot 10^{-4}$ \\
  \hline
  $0.01$ & $2.992885$ & $2.4q\cdot 10^{-3}$ & $2.999519$ & $1.6\cdot 10^{-4}$ \\
  \hline
  $0.005$ & $2.996434$ & $1.2\cdot 10^{-3}$ & $2.999759$ & $8\cdot 10^{-5}$ \\
  \hline
\end{tabular}
\end{center}
\caption{Example 1. Numerical estimations and relative errors of the blow-up time of collocation solutions in case 1 with $c_1=0.5$, and case 2 with \textit{Radau I} collocation points, $c_1=0$, $c_2=2/3$, for different initial stepsizes $h$.}
\label{tabla1c105radau}
\end{table}

\begin{table}[h]
\begin{center}
\begin{tabular}{|c|c|c|c|c|c|c|c|}
\hline
Case 1 & \multicolumn{3}{|c|}{Varying $c_1$} & \multicolumn{2}{|c|}{Extrapolation} & \multicolumn{2}{|c|}{Intersection} \\
\hline
$h$ & min. $t$ & max. $t$ & Rel. error & $t$ & Rel. error & $t$ & Rel. error \\
\hline
\multirow{2}{*}{$0.1$} & $2.68$ & $3.50$ & \multirow{2}{*}{$3\cdot 10^{-1}$} & \multirow{4}{*}{$3.018$} & \multirow{4}{*}{$6\cdot 10^{-3}$} & & \multirow{4}{*}{$5\cdot 10^{-3}$} \\
 & \small{($c_1=1$)} & \small{($c_1=0.01$)} & & & & $3.015$ & \\
\cline{1-4}
\multirow{2}{*}{$0.05$} & $2.82$ & $3.30$ & \multirow{2}{*}{$1.5\cdot 10^{-1}$} & & & \small{($c_1=0.399$)} & \\
 & \small{($c_1=1$)} & \small{($c_1=0.01$)} & & & & & \\
\hline
\multirow{2}{*}{$0.01$} & $2.94$ & $3.08$ & \multirow{2}{*}{$4.6\cdot 10^{-2}$} & \multirow{4}{*}{$3.01$} & \multirow{4}{*}{$3.3\cdot 10^{-3}$} & & \multirow{4}{*}{$2.6\cdot 10^{-4}$} \\
 & \small{($c_1=1$)} & \small{($c_1=0.01$)} & & & & $3.0008$ & \\
\cline{1-4}
\multirow{2}{*}{$0.005$} & $2.98$ & $3.04$ & \multirow{2}{*}{$2.3\cdot 10^{-2}$} & & & \small{($c_1=0.444$)} & \\
 & \small{($c_1=1$)} & \small{($c_1=0.01$)} & & & & & \\
\hline
\end{tabular}
\end{center}
\caption{Example 2. Numerical data of Figure \ref{figblowup2} (case 1: $m=1$, $c_1>0$).}
\label{tabla2c1}
\end{table}

\begin{table}[h]
\begin{center}
\begin{tabular}{|c|c|c|c|c|c|c|c|}
\hline
Case 2 & \multicolumn{3}{|c|}{Varying $c_2$} & \multicolumn{2}{|c|}{Extrapolation} & \multicolumn{2}{|c|}{Intersection} \\
\hline
$h$ & min. $t$ & max. $t$ & Rel. error & $t$ & Rel. error & $t$ & Rel. error \\
\hline
\multirow{2}{*}{$0.1$} & $2.97$ & $3.10$ & \multirow{2}{*}{$4.3\cdot 10^{-2}$} & \multirow{4}{*}{$3.007$}& \multirow{4}{*}{$2.4\cdot 10^{-3}$} & & \multirow{4}{*}{$4\cdot 10^{-5}$} \\
 & \small{($c_2=1$)} & \small{($c_2=0.01$)} & & & & $3.00012$ & \\
\cline{1-4}
\multirow{2}{*}{$0.05$} & $2.99$ & $3.05$ & \multirow{2}{*}{$2\cdot 10^{-2}$} & & & \small{($c_2=0.6466$)} & \\
 & \small{($c_2=1$)} & \small{($c_2=0.01$)} & & & & & \\
\hline
\multirow{2}{*}{$0.01$} & $2.997$ & $3.010$ & \multirow{2}{*}{$4.3\cdot 10^{-3}$} & \multirow{4}{*}{$3.000003$} & \multirow{4}{*}{$10^{-6}$} & & \multirow{4}{*}{$3.6\cdot 10^{-6}$} \\
 & \small{($c_2=1$)} & \small{($c_2=0.01$)} & & & & $3.000011$ & \\
\cline{1-4}
\multirow{2}{*}{$0.005$} & $2.9985$ & $3.005$ & \multirow{2}{*}{$2.15\cdot 10^{-3}$} & & & \small{($c_2=0.65114$)} & \\
 & \small{($c_2=1$)} & \small{($c_2=0.01$)} & & & & & \\
\hline
\end{tabular}
\end{center}
\caption{Example 2. Numerical data of Figure \ref{figblowup2} (case 2: $m=2$, $c_1=0$).}
\label{tabla2c2}
\end{table}

\begin{table}[h]
\begin{center}
\begin{tabular}{|c|c|c|c|c|}
  \hline
  \multirow{2}{*}{$h$} & \multicolumn{2}{|c|}{Case 1 - $c_1=0.5$} & \multicolumn{2}{|c|}{Case 2 - Radau I} \\
  \cline{2-5}
   & Blow-up & Rel. error & Blow-up & Rel. error \\
  \hline
  $0.1$ & $2.941795$ & $2\cdot 10^{-2}$ & $2.999559$ & $1.5\cdot 10^{-4}$ \\
  \hline
  $0.05$ & $2.970341$ & $10^{-2}$ & $2.999778$ & $7.4\cdot 10^{-5}$ \\
  \hline
  $0.01$ & $2.993979$ & $2\cdot 10^{-3}$ & $2.999955$ & $1.5\cdot 10^{-5}$ \\
  \hline
  $0.005$ & $2.996983$ & $10^{-3}$ & $2.999978$ & $7.3\cdot 10^{-6}$ \\
  \hline
\end{tabular}
\end{center}
\caption{Example 2. Numerical estimations and relative errors of the blow-up time of collocation solutions in case 1 with $c_1=0.5$, and case 2 with \textit{Radau I} collocation points, $c_1=0$, $c_2=2/3$, for different initial stepsizes $h$.}
\label{tabla2c105radau}
\end{table}

\begin{table}[h]
\begin{center}
\begin{tabular}{|c|c|c|c|c|c|c|c|}
\hline
Case 1 & \multicolumn{3}{|c|}{Varying $c_1$} & \multicolumn{2}{|c|}{Extrapolation} & \multicolumn{2}{|c|}{Intersection} \\
\hline
$h$ & min. $t$ & max. $t$ & Rel. error & $t$ & Rel. error & $t$ & Rel. error \\
\hline
\multirow{2}{*}{$0.1$} & $5.20$ & $6.55$ & \multirow{2}{*}{$2.3\cdot 10^{-1}$} & \multirow{4}{*}{$5.7625$} & \multirow{4}{*}{$3.8\cdot 10^{-3}$} & & \multirow{4}{*}{$3.6\cdot 10^{-3}$} \\
 & \small{($c_1=1$)} & \small{($c_1=0.01$)} & & & & $5.8055$ & \\
\cline{1-4}
\multirow{2}{*}{$0.05$} & $5.45$ & $6.20$ & \multirow{2}{*}{$1.3\cdot 10^{-1}$} & & & \small{($c_1=0.3961$)} & \\
 & \small{($c_1=1$)} & \small{($c_1=0.01$)} & & & & & \\
\hline
\multirow{2}{*}{$0.01$} & $5.71$ & $5.89$ & \multirow{2}{*}{$3.1\cdot 10^{-2}$} & \multirow{4}{*}{$5.77$} & \multirow{4}{*}{$2.6\cdot 10^{-3}$} & & \multirow{4}{*}{$1.4\cdot 10^{-4}$} \\
 & \small{($c_1=1$)} & \small{($c_1=0.01$)} & & & & $5.7856$ & \\
\cline{1-4}
\multirow{2}{*}{$0.005$} & $5.735$ & $5.840$ & \multirow{2}{*}{$1.8\cdot 10^{-2}$} & & & \small{($c_1=0.441$)} & \\
 & \small{($c_1=1$)} & \small{($c_1=0.01$)} & & & & & \\
\hline
\end{tabular}
\end{center}
\caption{Example 3. Numerical data of Figure \ref{figblowup3} (case 1: $m=1$, $c_1>0$). The blow-up time of the corresponding (exact) solution is assumed to be $\hat{t}\approx 5.78482$.}
\label{tabla3c1}
\end{table}

\begin{table}[h]
\begin{center}
\begin{tabular}{|c|c|c|c|c|c|c|c|}
\hline
Case 2 & \multicolumn{3}{|c|}{Varying $c_2$} & \multicolumn{2}{|c|}{Extrapolation} & \multicolumn{2}{|c|}{Intersection} \\
\hline
$h$ & min. $t$ & max. $t$ & Rel. error & $t$ & Rel. error & $t$ & Rel. error \\
\hline
\multirow{2}{*}{$0.1$} & $5.67$ & $6.10$ & \multirow{2}{*}{$7.4\cdot 10^{-2}$} & \multirow{4}{*}{$5.793$} & \multirow{4}{*}{$1.4\cdot 10^{-3}$} & & \multirow{4}{*}{$3\cdot 10^{-4}$} \\
 & \small{($c_2=1$)} & \small{($c_2=0.01$)} & & & & $5.78304$ & \\
\cline{1-4}
\multirow{2}{*}{$0.05$} & $5.73$ & $5.95$ & \multirow{2}{*}{$3.8\cdot 10^{-2}$} & & & \small{($c_2=0.5286$)} & \\
 & \small{($c_2=1$)} & \small{($c_2=0.01$)} & & & & & \\
\hline
\multirow{2}{*}{$0.01$} & $5.775$ & $5.810$ & \multirow{2}{*}{$6\cdot 10^{-3}$} & \multirow{4}{*}{$5.785$} & \multirow{4}{*}{$3.1\cdot 10^{-5}$} & & \multirow{4}{*}{$3.5\cdot 10^{-6}$} \\
 & \small{($c_2=1$)} & \small{($c_2=0.01$)} & & & & $5.7848$ & \\
\cline{1-4}
\multirow{2}{*}{$0.005$} & $5.779$ & $5.800$ & \multirow{2}{*}{$3.6\cdot 10^{-3}$} & & & \small{($c_2=0.514$)} & \\
 & \small{($c_2=1$)} & \small{($c_2=0.01$)} & & & & & \\
\hline
\end{tabular}
\end{center}
\caption{Example 3. Numerical data of Figure \ref{figblowup3} (case 2: $m=2$, $c_1=0$). The blow-up time of the corresponding (exact) solution is assumed to be $\hat{t}\approx 5.78482$.}
\label{tabla3c2}
\end{table}

\begin{table}[h]
\begin{center}
\begin{tabular}{|c|c|c|c|c|c|c|}
  \hline
  \multirow{2}{*}{$h$} & \multicolumn{2}{|c|}{Case 1 - $c_1=0.5$} & \multicolumn{2}{|c|}{Case 2 - Radau I} & \multicolumn{2}{|c|}{Case 2 - $c_2=0.5$} \\
  \cline{2-7}
   & Blow-up & Rel. error & Blow-up & Rel. error & Blow-up & Rel. error \\
  \hline
  $0.1$ & $5.694865$ & $1.6\cdot 10^{-2}$ & $5.751797$ & $5.7\cdot 10^{-3}$ & $5.788215$ & $5.9\cdot 10^{-4}$ \\
  \hline
  $0.05$ & $5.739117$ & $7.9\cdot 10^{-3}$ & $5.767963$ & $2.9\cdot 10^{-3}$ & $5.786612$ & $3.1\cdot 10^{-4}$ \\
  \hline
  $0.01$ & $5.775121$ & $1.7\cdot 10^{-3}$ & $5.781037$ & $6.5\cdot 10^{-4}$ & $5.784995$ & $3\cdot 10^{-5}$ \\
  \hline
  $0.005$ & $5.780132$ & $8.1\cdot 10^{-4}$ & $5.783066$ & $3\cdot 10^{-4}$ & $5.784875$ & $9.5\cdot 10^{-6}$ \\
  \hline
\end{tabular}
\end{center}
\caption{Example 3. Numerical estimations and relative errors of the blow-up time of collocation solutions in case 1 with $c_1=0.5$, and case 2 with \textit{Radau I} collocation points, $c_1=0$, $c_2=2/3$, and $c_1=0$, $c_2=0.5$, for different initial stepsizes $h$. The blow-up time of the corresponding (exact) solution is assumed to be $\hat{t}\approx 5.78482$.}
\label{tabla3c105radau}
\end{table}

\begin{table}[h]
\begin{center}
\begin{tabular}{|c|c|c|c|c|c|c|c|}
\hline
Case 1 & \multicolumn{3}{|c|}{Varying $c_1$} & \multicolumn{2}{|c|}{Extrapolation} & \multicolumn{2}{|c|}{Intersection} \\
\hline
$h$ & min. $t$ & max. $t$ & Rel. error & $t$ & Rel. error & $t$ & Rel. error \\
\hline
\multirow{2}{*}{$0.01$} & $1.593$ & $1.722$ & \multirow{2}{*}{$7.8\cdot 10^{-2}$} & \multirow{4}{*}{$1.6436$} & \multirow{4}{*}{$1.4\cdot 10^{-3}$} & & \multirow{4}{*}{$5.2\cdot 10^{-4}$} \\
 & \small{($c_1=1$)} & \small{($c_1=0.05$)} & & & & $1.6467$ & \\
\cline{1-4}
\multirow{2}{*}{$0.005$} & $1.613$ & $1.691$ & \multirow{2}{*}{$4.7\cdot 10^{-2}$} & & & \small{($c_1=0.444$)} & \\
 & \small{($c_1=1$)} & \small{($c_1=0.05$)} & & & & & \\
\hline
\multirow{2}{*}{$0.001$} & $1.6367$ & $1.6571$ & \multirow{2}{*}{$1.2\cdot 10^{-2}$} & \multirow{4}{*}{$1.6456$} & \multirow{4}{*}{$1.2\cdot 10^{-4}$} & & \multirow{4}{*}{$6.6\cdot 10^{-5}$} \\
 & \small{($c_1=1$)} & \small{($c_1=0.05$)} & & & & $1.64595$ & \\
\cline{1-4}
\multirow{2}{*}{$0.0005$} & $1.6406$ & $1.6521$ & \multirow{2}{*}{$7\cdot 10^{-3}$} & & & \small{($c_1=0.461$)} & \\
 & \small{($c_1=1$)} & \small{($c_1=0.05$)} & & & & & \\
\hline
\end{tabular}
\end{center}
\caption{Example 4. Numerical data of Figure \ref{figblowup4} (case 1: $m=1$, $c_1>0$). The blow-up time of the corresponding (exact) solution is assumed to be $\hat{t}\approx 1.645842$.}
\label{tabla4c1}
\end{table}

\begin{table}[h]
\begin{center}
\begin{tabular}{|c|c|c|c|c|c|}
\hline
Case 2 & \multicolumn{3}{|c|}{Varying $c_2$} & \multicolumn{2}{|c|}{Extrapolation} \\
\hline
$h$ & min. $t$ & max. $t$ & Rel. error & $t$ & Rel. error \\
\hline
\multirow{2}{*}{$0.01$} & $1.6445$ & $1.6490$ & \multirow{2}{*}{$2.7\cdot 10^{-3}$} & \multirow{4}{*}{$1.64648$} & \multirow{4}{*}{$3.9\cdot 10^{-4}$} \\
 & \small{($c_2=0.95$)} & \small{($c_2=0.33$)} & & & \\
\cline{1-4}
\multirow{2}{*}{$0.005$} & $1.6456$ & $1.6476$ & \multirow{2}{*}{$1.2\cdot 10^{-3}$} & & \\
 & \small{($c_2=0.9$)} & \small{($c_2=0.3$)} & & & \\
\hline
\multirow{2}{*}{$0.001$} & $1.64577$ & $1.64622$ & \multirow{2}{*}{$2.7\cdot 10^{-4}$} & \multirow{4}{*}{$1.64586$} & \multirow{4}{*}{$1.4\cdot 10^{-5}$} \\
 & \small{($c_2=0.9$)} & \small{($c_2=0.3$)} & & & \\
\cline{1-4}
\multirow{2}{*}{$0.0005$} & $1.64581$ & $1.64607$ & \multirow{2}{*}{$1.6\cdot 10^{-4}$} & & \\
 & \small{($c_2=0.85$)} & \small{($c_2=0.28$)} & & & \\
\hline
\end{tabular}
\end{center}
\caption{Example 4. Numerical data of Figure \ref{figblowup4} (case 1: $m=2$, $c_1=0$). The intersection method can not be applied because there are more than one intersection. The blow-up time of the corresponding (exact) solution is assumed to be $\hat{t}\approx 1.645842$.}
\label{tabla4c2}
\end{table}

\begin{table}[h]
\begin{center}
\begin{tabular}{|c|c|c|c|c|}
  \hline
  \multirow{2}{*}{$h$} & \multicolumn{2}{|c|}{Case 1 - $c_1=0.5$} & \multicolumn{2}{|c|}{Case 2 - Radau I} \\
  \cline{2-5}
   & Blow-up & Rel. error & Blow-up & Rel. error \\
  \hline
  $0.01$ & $1.638700$ & $4.3\cdot 10^{-3}$ & $1.646991$ & $7\cdot 10^{-4}$ \\
  \hline
  $0.005$ & $1.642843$ & $1.8\cdot 10^{-3}$ & $1.646540$ & $4.2\cdot 10^{-4}$ \\
  \hline
  $0.001$ & $1.645172$ & $4.1\cdot 10^{-4}$ & $1.645985$ & $8.7\cdot 10^{-5}$ \\
  \hline
  $0.0005$ & $1.645491$ & $2.1\cdot 10^{-4}$ & $1.645919$ & $4.7\cdot 10^{-5}$ \\
  \hline
\end{tabular}
\end{center}
\caption{Example 4. Numerical estimations and relative errors of the blow-up time of collocation solutions in case 1 with $c_1=0.5$, and case 2 with \textit{Radau I} collocation points, $c_1=0$, $c_2=2/3$, for different initial stepsizes $h$. The blow-up time of the corresponding (exact) solution is assumed to be $\hat{t}\approx 1.645842$.}
\label{tabla4c105radau}
\end{table}


\begin{thebibliography}{00}

\bibitem{Kirk09} C. M. Kirk. Numerical and asymtotic analysis of a
  localized heat source undergoing periodic motion. \textit{Nonlinear
  Anal.} \textbf{71} (2009), e2168--e2172.

\bibitem{CalCapo09} F. Calabr\`o, G. Capobianco. Blowing up behavior for
  a class of nonlinear VIEs connected with parabolic
  PDEs. \textit{J. Comput. Appl. Math.} \textbf{228} (2009), 580--588.

\bibitem{Myd94} W. Mydlarczyk. A condition for finite blow-up time for
  a Volterra integral equation. \textit{J. Math. Anal. Appl.} \textbf{181}
  (1994), 248--253.

\bibitem{Myd99} W. Mydlarczyk. The blow-up solutions of integral
  equations. \textit{Colloq. Math.} \textbf{79} (1999), 147--156.

\bibitem{Okra07} T. Ma\l olepszy, W. Okrasi\'nski. Conditions for
  blow-up of solutions of some nonlinear Volterra integral
  equations. \textit{J. Comput. Appl. Math.} \textbf{205} (2007), 744--750.

\bibitem{Okra08} T. Ma\l olepszy, W. Okrasi\'nski. Blow-up conditions
  for nonlinear Volterra integral equations with power
  nonlinearity. \textit{Appl. Math. Letters.} \textbf{21} (2008), 307--312.

\bibitem{Okra10} T. Ma\l olepszy, W. Okrasi\'nski. Blow-up time for
  solutions for some nonlinear Volterra integral
  equations. \textit{J. Math. Anal. Appl.} \textbf{366} (2010),
  372--384.

\bibitem{Brun04} H. Brunner. Collocation Methods for Volterra Integral
  and Related Functional Differential Equations. Cambridge University
  Press, Cambridge (2004).

\bibitem{YB13} Z. W. Yang, H. Brunner. Blow-up behavior of collocation
  solutions to Hammerstein-type Volterra integral equations.
  \textit{SIAM J. Numer. Anal.} \textbf{51} (2013), 2260--2282.

\bibitem{Kras84} M. A. Krasnosel'skii, P. P. Zabreiko. Geometric
  Methods of Nonlinear Analysis. Springer Verlag, New York, 1984.

\bibitem{BB11} R. Ben\'{\i}tez, V. J. Bol\'os. Existence and uniqueness of
  nontrivial collocation solutions of implicitly linear homogeneous
  Volterra integral equations. \textit{J. Comput. Appl. Math.} \textbf{235}
  (2011), 3661--3672.

\bibitem{Brun92} H. Brunner. Implicitly linear collocation methods for
  nonlinear Volterra integral equations. \textit{Appl. Numer. Math.} \textbf{9}
  (1992), 235--247.

\bibitem{AB03} M.R. Arias, R. Ben\'{\i}tez, Aspects of the behaviour of
  solutions of nonlinear Abel equations. \textit{Nonlinear Anal. T.M.A.}, \textbf{54}
  (2003), pp. 1241--1249.

\bibitem{AB01} M.R. Arias, R. Ben\'{\i}tez, A note of the uniqueness and
  the attractive behaviour of solutions for nonlinear Volterra
  equations, \textit{J. Integral Equations Appl}. \textbf{13} (4)
  (2001) 305--310.



\end{thebibliography}
\end{document}